\newtheorem{prop}{Proposition}[section]
\newtheorem{thm}[prop]{Theorem}
\newtheorem{lem}[prop]{Lemma}
\theoremstyle{definition}
\newtheorem{ex}[prop]{Example}
\newtheorem{rem}[prop]{Remark}
\newtheorem*{ack}{Acknowledgements}
\def\co{\colon\thinspace}
\newcommand{\C}{\mathbb{C}}
\newcommand{\rme}{\mathrm{e}}
\newcommand{\rmi}{\mathrm{i}}
\newcommand{\tlambda}{\tilde{\lambda}}
\newcommand{\tM}{\widetilde{M}}
\newcommand{\tmu}{\tilde{\mu}}
\newcommand{\N}{\mathbb{N}}
\newcommand{\R}{\mathbb{R}}
\newcommand{\RP}{\mathbb{R}\mathrm{P}}
\newcommand{\Z}{\mathbb{Z}}
\DeclareMathOperator{\Int}{Int}
\begin{document}

\author[H.~Geiges]{Hansj\"org Geiges}

\author[N.~Thies]{Norman Thies}
\address{Mathematisches Institut, Universit\"at zu K\"oln,
Weyertal 86--90, 50931 K\"oln, Germany}
\email{geiges@math.uni-koeln.de, nthies1@smail.uni-koeln.de}

\title{Klein bottles in lens spaces}

\date{}

\begin{abstract}
Bredon and Wood have given a complete answer to the embeddability
question for nonorientable surfaces in lens spaces. They formulate
their result in terms of a recursive formula that determines,
for a given lens space, the minimal genus of embeddable
nonorientable surfaces. Here we give a direct proof that,
amongst lens spaces as target manifolds, the Klein bottle embeds
into $L(4n,2n\pm 1)$ only. We describe four explicit realisations
of these embeddings.
\end{abstract}

\subjclass[2020]{57R40, 57K30, 57M99}

\keywords{Klein bottle, lens space, embedding, Seifert fibration}

\maketitle


\section{Introduction}
A nonorientable closed manifold never embeds as a hypersurface into
euclidean space (or a sphere). A strikingly beautiful proof of this fact
is due to Samelson~\cite{same69}. Although it is hardly possible
to be more concise than Samelson, here is a pr\'ecis of his proof.

Suppose $M\subset\R^n$ were a nonorientable hypersurface.
Take a simple closed path in $M$ along which the orientation
is reversed; then the same must be true for the coorientation.
By pushing the path in the normal direction, and then joining
initial and end point by a short line segment transverse to~$M$,
one obtains an embedded loop in $\R^n$ intersecting $M$
in a single point. The embedding of this loop extends to a smooth
mapping $D^2\rightarrow\R^n$ of a disc, which can be made
(rel boundary) transverse to~$M$. But then the preimage of $M$
in $D^2$ is a $1$-dimensional closed submanifold with
a single boundary point --- contradiction. (Notice that $M$ need not
be a closed manifold for us to reach this contradiction; it suffices
that $M$ be a submanifold without boundary that is a closed
subset of~$\R^n$.)

Thus, it is a natural question to ask about embeddings of
nonorientable surfaces into the `most simple' orientable $3$-manifolds
beyond the $3$-sphere. We take these to be $3$-manifolds with
a Heegaard splitting of genus~$1$, that is, lens spaces and $S^1\times S^2$.

\begin{rem}
By `lens spaces' we shall mean `honest' lens spaces $L(p,q)$ with
$p\geq 1$, which are quotients of $S^3$ under a linear $\Z_p$-action.
In other words, we do not regard $L(0,1)=S^1\times S^2$ as a lens space,
but instead discuss it separately.
\end{rem}

Explicitly, $L(p,q)$ is defined, as an oriented manifold,
as the quotient of $S^3\subset\C^2$
under the $\Z_p$-action generated by
\begin{equation}
\label{eqn:sigma}
\sigma(z_1,z_2)=\bigl(\rme^{2\pi\rmi/p}z_1,\rme^{2\pi\rmi q/p}z_2\bigr).
\end{equation}
Here $p\in\N$ and $q\in\Z$ are assumed to be coprime. Without loss of
generality one may require that $1\leq q\leq p-1$.

The following theorem is Corollary 6.4 in \cite{brwo69}.
See also \cite{end92} for alternative proofs of the
results of Bredon and Wood.

\begin{thm}
\label{thm:BW}
Amongst lens spaces as target manifolds,
the Klein bottle embeds precisely into
$L(4n,2n\pm 1)$, $n\in \N$.
\end{thm}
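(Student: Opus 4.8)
My plan is to analyse the embedding through a regular neighbourhood of $K$ and its complement, reduce the ``only'' direction to a Dehn-filling problem on the well-understood $I$-bundle over the Klein bottle, and then read off the lens-space parameters; the ``if'' direction I would handle by exhibiting explicit surfaces. Since $L(p,q)$ is orientable while $K$ is not, any embedding $K\hookrightarrow L(p,q)$ is one-sided, so a closed regular neighbourhood $N$ of $K$ is the unique $I$-bundle over $K$ with orientable total space; call it $\mathcal K$. Its boundary $\partial\mathcal K$ is the orientation double cover of $K$, hence a torus $T^2$, and $X:=L(p,q)\setminus\Int\mathcal K$ is a compact orientable $3$-manifold with $\partial X=T^2$. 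The first real step is to identify $X$: as $\pi_1 K$ is infinite it cannot inject into the finite group $\pi_1 L(p,q)=\Z_p$, so the inclusion of $K$ is not $\pi_1$-injective, and the loop theorem provides a compressing disc; pushing it off $\mathcal K$ yields a properly embedded disc in $X$ with essential boundary on $T^2$. Compressing $X$ along this disc and using that $L(p,q)$ is irreducible (it is covered by $S^3$) then forces $X$ to be a solid torus. I expect this to be the main obstacle of the proof --- one must check that the compressing disc can be arranged to meet $\mathcal K$ only in a collar, that its boundary curve is essential on $\partial X$, and that no nontrivial reducible or closed summand survives the compression; once this is in hand the rest is bookkeeping.

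So $L(p,q)=\mathcal K\cup_\phi V$ for a solid torus $V$. Now $\mathcal K$ carries a standard Seifert fibration over the disc with exactly two exceptional fibres, both of order $2$ (equivalently, $\mathcal K=W_1\cup_A W_2$, the union of two copies of the twisted $I$-bundle over the M\"obius band --- each a solid torus --- glued along an annulus $A$ winding twice around each core). If $\phi$ sent the meridian of $V$ to a regular fibre, the filling would be reducible (in fact $\RP^3\#\RP^3$), contradicting irreducibility; hence the fibration extends over $V$, and as a lens space admits no Seifert fibration over $S^2$ with three exceptional fibres, $V$ must be an ordinary fibred solid torus. Thus $L(p,q)$ is Seifert fibred over $S^2$ with precisely two exceptional fibres, both of order $2$. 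For such a fibration, with unnormalised invariants $(2,\beta_1),(2,\beta_2)$ and integral term $b$, one has $|H_1|=|4b+2\beta_1+2\beta_2|=2|2b+\beta_1+\beta_2|$; since $\gcd(2,\beta_i)=1$ both $\beta_i$ are odd, $\beta_1+\beta_2$ is even, and so $p=|H_1|$ is divisible by $4$, say $p=4n$. The standard continued-fraction identification of this Seifert manifold with a lens space --- equivalently, computing the gluing $W_1\cup_A W_2\cup_\phi V$ of three solid tori directly via $\mathrm{SL}_2(\Z)$ --- then exhibits it as $L(4n,2n\pm 1)$, consistently with $2n-1\equiv-(2n+1)$ and $(2n\pm1)^2\equiv1\pmod{4n}$, so that $L(4n,2n+1)\cong L(4n,2n-1)$. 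This settles necessity.

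For the converse I would give the promised four explicit realisations: (i) on a genus-$1$ Heegaard surface of $L(4n,2n\pm1)$, take an annulus and attach two bands, one a meridian disc of each of the two solid tori, arranged so that the resulting closed surface is a Klein bottle precisely for these gluing parameters; (ii) in the Seifert fibration over the orbifold $S^2(2,2)$, take the preimage of an embedded arc joining the two cone points, which is a union of two M\"obius bands along a common boundary circle, hence a Klein bottle; (iii) using the Seifert fibration of $L(4n,2n\pm1)$ over $\RP^2$, take the preimage of the core circle of a M\"obius band in the base, which is a Klein bottle because the fibration is orientation-reversing there; and (iv) in the quotient model $S^3/\langle\sigma\rangle$, realise $K$ as the image of a suitable $\sigma$-invariant torus in $S^3$. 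Each of these requires only a short direct verification.
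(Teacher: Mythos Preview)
Your architecture coincides with the paper's: pass to the twisted $I$-bundle $\mathcal K=\nu K$, show its complement is a solid torus, analyse the possible Dehn fillings, then exhibit explicit embeddings. One correction: the loop theorem applies to two-sided surfaces, so you should compress $T=\partial\mathcal K$ directly (its $\Z^2$ cannot inject into $\Z_p$) rather than the one-sided~$K$; the paper does this, and for the point you flag as the main obstacle it verifies explicitly that $\pi_1(T)\to\pi_1(\mathcal K)$ is injective, forcing the compressing disc into~$X$.

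The substantive divergence is in isolating the admissible fillings. The paper builds an explicit model of $\mathcal K$, writes the map $\pi_1(T)\to\pi_1(\mathcal K)$ on generators, and shows by a direct metacyclic-group calculation that the filled fundamental group is cyclic only when the meridian of $V$ hits the class $(n,\pm1)$; it then identifies $L(4n,2n\pm1)$ by extracting a genus-$1$ Heegaard splitting from the $S^2(2,2)$ fibration and tracking a meridian across it by hand. You instead invoke the classification fact that no lens space is Seifert fibred over $S^2$ with three genuine exceptional fibres --- hence $V$ is regularly fibred --- and then appeal to the continued-fraction identification. Your route is shorter but rests on heavier background results; the paper's is longer but entirely self-contained. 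For existence, your (iii) and (iv) match two of the paper's four realisations; your (ii), the preimage of an arc joining the two cone points of $S^2(2,2)$, is correct but does not appear in the paper, which instead glues two M\"obius bands lying in the two Heegaard solid tori and separately gives a handle-decomposition picture on the splitting torus.
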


Bredon and Wood derive this statement as a corollary of their
much more general results about the minimal genus of a nonorientable
surface that can be embedded into a given lens space. (It is clear that one
can always increase the genus by $2$; simply form the connected sum with
a small $2$-torus embedded in a ball inside the ambient $3$-manifold.)
They present a recursive formula that allows one to compute this minimal
genus.

The generality of their argument
slightly obscures the beautiful geometry behind the
embeddings of Klein bottles. Our aim in this note, therefore,
is to present a direct and elementary proof of
Theorem~\ref{thm:BW}. Furthermore, we describe four explicit
realisations of these embeddings, some of which do not seem to
have appeared in the literature before.
\section{Embeddings of the projective plane}
As a warm-up, we discuss embeddings of the projective plane $\RP^2$
into lens spaces. Obviously, $\RP^2$ embeds into $\RP^3=L(2,1)$ by
inclusion.

\begin{prop}
The only lens space into which $\RP^2$ embeds is $L(2,1)$.
\end{prop}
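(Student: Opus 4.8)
The plan is to use the same intersection-theoretic obstruction that underlies Samelson's argument, now transplanted into a lens space. Suppose $\RP^2$ embeds in $L(p,q)$. Since $\RP^2$ is nonorientable, a generator of $H_1(\RP^2;\Z_2)=\Z_2$ is represented by an orientation-reversing simple closed curve $\gamma$, and as in the Samelson argument the normal line bundle along $\gamma$ is also nontrivial. Pushing $\gamma$ off itself in the normal direction and closing up with a short transverse arc produces a loop $\ell$ in $L(p,q)$ that meets $\RP^2$ transversally in exactly one point, so $\ell\cdot\RP^2=1$ in $\Z_2$-intersection pairing. Equivalently, the class $[\RP^2]\in H_2(L(p,q);\Z_2)$ is dual to $[\ell]\in H_1(L(p,q);\Z_2)$ and pairs nontrivially with it; in particular $[\RP^2]\neq 0$ in $H_2(L(p,q);\Z_2)$.

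Next I would compute $H_2(L(p,q);\Z_2)$. Since $H_2(L(p,q);\Z)=0$ and $H_1(L(p,q);\Z)=\Z_p$, the universal coefficient theorem gives $H_2(L(p,q);\Z_2)\cong \mathrm{Tor}(\Z_p,\Z_2)$, which is $\Z_2$ if $p$ is even and $0$ if $p$ is odd. So for $p$ odd we already get a contradiction: $[\RP^2]$ cannot be nonzero. This disposes of all lens spaces with $p$ odd, including $S^3=L(1,0)$-type cases.

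For $p$ even, $H_2(L(p,q);\Z_2)=\Z_2$, so the homological obstruction alone does not kill the embedding, and the main work is here. The key additional input is the Euler characteristic: $\chi(\RP^2)=1$. If $\RP^2\subset L(p,q)$, cutting $L(p,q)$ along $\RP^2$ — more precisely, removing a tubular neighbourhood $N$ of $\RP^2$, which is a twisted $I$-bundle over $\RP^2$ with boundary $S^2$ — leaves a compact manifold $X=L(p,q)\setminus\Int(N)$ with $\partial X=S^2$. Capping $X$ with a ball $B^3$ gives a closed orientable $3$-manifold $Y$, and one reassembles $L(p,q)$ as $Y$ with a ball removed, glued to $N$ along $S^2$. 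Since $N$ is the nonorientable twisted $I$-bundle over $\RP^2$ (the mapping cylinder of the double cover $S^2\to\RP^2$, i.e. the complement of an open ball in $\RP^3$), this exhibits $L(p,q)$ as a connected sum $Y\#\RP^3$. But lens spaces are prime (they have finite cyclic fundamental group and are irreducible for $p\geq 2$, while $S^1\times S^2$ is not a lens space), so the only way $L(p,q)=Y\#\RP^3$ is with $Y=S^3$, giving $L(p,q)=\RP^3=L(2,1)$.

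The step I expect to be the main obstacle is the clean identification of the tubular neighbourhood of a nonorientably embedded $\RP^2$ with the twisted $I$-bundle whose complement is an open ball in $\RP^3$ — that is, pinning down that $\partial N\cong S^2$ rather than some other surface. This follows because $N$ is a $D^1$-bundle over $\RP^2$ that is nonorientable as a total space (the orientation-reversing loop forces the bundle to be the nontrivial one), and the unique nontrivial $D^1$-bundle over $\RP^2$ has $S^2$ boundary; equivalently, its associated $S^0$-bundle is the connected orientation double cover $S^2\to\RP^2$. Once this is in hand, the prime decomposition theorem for $3$-manifolds (which I will cite) finishes the argument immediately. An alternative route avoiding connected-sum machinery would be a direct $\Z_2$-homology count using the Mayer–Vietoris sequence for $L(p,q)=N\cup_{S^2}X$ to force $H_*(X;\Z_2)$ to be that of a ball and then $H_*(L(p,q);\Z_2)=H_*(\RP^3;\Z_2)$, but the connected-sum/prime argument is cleaner and more geometric, so that is the one I would write up.
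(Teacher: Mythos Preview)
Your proposal is correct, and its second half --- identifying the tubular neighbourhood $N$ as the unique nontrivial $I$-bundle over $\RP^2$, recognising $\partial N\cong S^2$, and then using irreducibility/primeness to force the complement to be a ball --- is exactly the paper's argument, phrased slightly differently (you say ``$L(p,q)=Y\#\RP^3$ and lens spaces are prime''; the paper says ``the $2$-sphere $\partial N$ bounds a ball, which must lie on the side opposite $\nu\RP^2$'').

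The one thing worth pointing out is that your first half is superfluous: the Samelson-style intersection argument and the $\Z_2$-homology computation correctly rule out odd~$p$, but your connected-sum argument already handles all~$p$ uniformly, with no case split needed. Nothing in the tubular-neighbourhood step uses $p$ even. So you could delete the entire first paragraph and the proof would still be complete --- and it would then match the paper's proof almost line for line. (Incidentally, the paper notes after its proof that the argument really only uses primeness of the ambient manifold, so $\RP^3$ is the unique orientable prime $3$-manifold containing $\RP^2$; your connected-sum phrasing makes this generalisation especially transparent.)
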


\begin{proof}
Let $\RP^2\subset L(p,q)$ be an embedded copy of the
projective plane. Since $L(p,q)$
is orientable, and $\RP^2$ is not, a (closed) tubular neighbourhood
$\nu\RP^2$ is diffeomorphic to the total space
of a nontrivial $I$-bundle over $\RP^2$, where $I$
denotes a compact interval.

There is a unique such bundle, which can be seen by regarding
$\RP^2$ as being obtained by gluing a $2$-disc $D^2$ and a M\"obius
band along their respective boundary circle. Over the
M\"obius band, the $I$-bundle must be nontrivial, and
since the M\"obius band retracts to a circle, this bundle
is unique. The boundary circle of the M\"obius band is a double
cover of its spine, so there the $I$-bundle is trivial,
as it is over~$D^2$. The gluing of these two bundles
is unique, since there are no nontrivial loops in
the diffeomorphism group of~$I$.

We claim that $\partial(\nu\RP^2)$ is a $2$-sphere.
To this end, think of $\RP^3$ as a $3$-ball $D^3$ with antipodal
points on $S^2=\partial D^3$ identified. Since $S^2$ descends
to an embedded $\RP^2$ in the quotient, we see that the total space
of the unique nontrivial $I$-bundle over $\RP^2$ is diffeomorphic
to $\RP^3$ with a small open $3$-ball $B^3_{\varepsilon}$ removed.

Now, every lens space is irreducible~\cite[Proposition~1.6]{hatc07}
--- that is, any embedded $2$-sphere bounds a $3$-ball ---
since it is covered by the $3$-sphere, which is irreducible by
Alexander's theorem~\cite[Theorem~1.1]{hatc07}.

Since $\nu\RP^2\simeq\RP^2$ is not a ball, we deduce that the ambient
space $L(p,q)$ is obtained from
$\nu\RP^2=\RP^3\setminus B^3_{\varepsilon}$ by attaching
a copy of~$D^3$. The result of this gluing is unique: up to homeomorphism
by the Alexander trick (any homeomorphism of
$S^2$ extends to a homeomorphism of~$D^3$), up to diffeomorphism by
Smale's theorem~\cite{smal59}. We conclude that $L(p,q)=L(2,1)$.
\end{proof}

\begin{rem}
Observe that our argument has shown more than is stated
in the proposition. The only information we had to use
about the ambient manifold was that any \emph{separating}
$2$-sphere bounds a $3$-ball, in other words, that the
ambient manifold is prime. Thus, $\RP^3$ is the only orientable
prime $3$-manifold into which $\RP^2$ can be embedded. In particular,
$\RP^2$ does not embed into $S^1\times S^2$ either.
\end{rem}
\section{Heegaard splitting of $L(p,q)$}
The following result, which is standard textbook material,
is crucial for much of our discussion, so we give a brief
indication how to prove this statement.

\begin{prop}
\label{prop:lens-heegaard}
Choose $r,s\in\Z$ such that $pr+qs=1$. Then $L(p,q)$ has a Heegaard splitting
of genus~$1$, where the gluing of the two solid tori is given by
\begin{equation}
\label{eqn:matrix}
\begin{array}{rcl}
\mu_1 & \sim & p\lambda_2-q\mu_2,\\
\lambda_1 & \sim & s\lambda_2 + r\mu_2.
\end{array}
\end{equation}
\end{prop}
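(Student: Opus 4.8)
The plan is to realise the Heegaard splitting of $L(p,q)$ as the quotient of the standard genus-$1$ splitting of~$S^3$ under the $\Z_p$-action~\eqref{eqn:sigma}, and then to read off the gluing~\eqref{eqn:matrix} from a short computation in $H_1$ of the splitting torus. Recall that $S^3\subset\C^2$ is the union of the solid tori $V_1=\{|z_1|\le|z_2|\}$ and $V_2=\{|z_2|\le|z_1|\}$, glued along the Clifford torus $T=\{|z_1|=|z_2|=1/\sqrt2\}$. Since $\sigma$ preserves $|z_1|$ and $|z_2|$, it preserves $V_1$, $V_2$ and~$T$; on the core circle $\{z_1=0\}$ of~$V_1$ it acts by $z_2\mapsto\rme^{2\pi\rmi q/p}z_2$ and on the core $\{z_2=0\}$ of~$V_2$ by $z_1\mapsto\rme^{2\pi\rmi/p}z_1$, both rotations by a primitive $p$th root of unity because $\gcd(p,q)=1$, hence free. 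As $\sigma$ then acts freely on each~$V_i$ by rotations of its disc and circle factors, each $\overline V_i:=V_i/\Z_p$ is again a solid torus, and $L(p,q)=\overline V_1\cup_{\overline T}\overline V_2$ with $\overline T=T/\Z_p$ is a genus-$1$ Heegaard splitting. (For $p=1$ this is just the standard splitting of $S^3$, so I assume $p\ge2$ below.)

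To compute the gluing I would introduce coordinates. Writing a point of~$T$ as $(\rme^{2\pi\rmi x},\rme^{2\pi\rmi y})/\sqrt2$ identifies $T$ with $\R^2/\Z^2$, under which $\sigma$ becomes translation by $(1/p,q/p)$; hence $\overline T=\R^2/L$, where $L$ is spanned by $f_1=(1/p,q/p)$ and $f_2=(0,1)$ and one has $(1,0)=pf_1-qf_2$. Let $[f_1],[f_2]$ denote the corresponding basis of $H_1(\overline T)$. On $T$ itself the meridian of $V_1$ (bounding a disc in $V_1$) and the longitude of $V_2$ (isotopic to the core $\{z_2=0\}$) are both the curve in direction $(1,0)$, while the meridian of $V_2$ and the longitude of $V_1$ are the curve in direction $(0,1)$ --- this is the standard genus-$1$ splitting of~$S^3$.

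It remains to track these curves in the quotient. A short check with the lattice~$L$ shows that for $p\ge2$ the map $\sigma$ carries a meridian disc of~$V_i$ entirely off itself, so its $p$ translates are disjoint and its image in~$\overline V_i$ is an embedded meridian disc; the boundary of this image is the image of the meridian of~$V_i$. Hence the meridian $\mu_2$ of~$\overline V_2$ is $[f_2]$, while the meridian $\mu_1$ of~$\overline V_1$ is the class in $H_1(\overline T)$ of the image of the $(1,0)$-curve, namely $pf_1-qf_2$. Taking $\lambda_2:=[f_1]$ --- which one checks is isotopic in $\overline V_2$ to its core, by an explicit isotopy pushing the image of the arc $t\mapsto(\rme^{2\pi\rmi t},\rme^{2\pi\rmi qt})/\sqrt2$, $t\in[0,1/p]$, off~$T$ onto $\{z_2=0\}$ --- yields $\mu_1=p\lambda_2-q\mu_2$, the first line of~\eqref{eqn:matrix}. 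Finally, a longitude $\lambda_1$ of~$\overline V_1$ is any curve on~$\overline T$ meeting $\mu_1$ exactly once; writing $\lambda_1=s[f_1]+r[f_2]=s\lambda_2+r\mu_2$, this condition is precisely $pr+qs=\pm1$, which holds for the given $r,s$ once $\lambda_1$ is oriented so that the sign is $+1$. This gives the second line of~\eqref{eqn:matrix}.

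The one point deserving care is the bookkeeping: keeping meridians, longitudes and orientations straight on~$\overline T$, so that the $2\times2$ gluing matrix comes out exactly as in~\eqref{eqn:matrix} --- with determinant $pr+qs=1$, as it must be --- rather than merely up to sign and up to the standard ambiguity $\lambda_1\mapsto\lambda_1+k\mu_1$ in the choice of a longitude. The remaining ingredients, that a free rotation quotient of a solid torus is a solid torus and the linear algebra in $H_1(\overline T)$, are entirely routine.
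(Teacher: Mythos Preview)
Your argument is correct and follows essentially the same strategy as the paper's: quotient the standard genus-$1$ Heegaard splitting of $S^3$ by the $\Z_p$-action and track how meridians and longitudes descend to the quotient solid tori. The execution differs only in presentation --- the paper works with an explicit fundamental domain (your $[f_1]$ is precisely the paper's $\lambda_2$) and obtains $\lambda_1$ by the trick of replacing the generator $\sigma$ with $\sigma^s$ on the $V_1$-side rather than via your intersection-number argument; one small slip to fix: with the paper's basis ordering $(\mu_i,\lambda_i)$ the gluing matrix has determinant $-(pr+qs)=-1$, as an orientation-reversing Heegaard gluing should.
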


Observe that with respect to the bases $(\mu_i,\lambda_i)$, the gluing map
is described by a matrix
of determinant~$-1$, that is, the gluing map reverses the orientation.

\begin{proof}[Proof of Proposition~\ref{prop:lens-heegaard}]
It is convenient to think of $S^3\subset\C^2$ as the $3$-sphere of
radius $\sqrt{2}$, that is, $S^3=\{|z_1|^2+|z_2|^2=2\}$.
A genus $1$ Heegaard splitting of $S^3\subset\C^2$ is then given by
the solid tori $\tM_i:=\{(z_1,z_2)\in S^3\co z_i\leq 1\}$, $i=1,2$.
A diffeomorphism from $S^1\times D^2$ to $\tM_2$ is defined by
\[ \bigl(\rme^{\rmi\theta},r\rme^{\rmi\varphi}\bigr)\longmapsto
\bigl(\sqrt{2-r^2}\,\rme^{\rmi\theta},r\rme^{\rmi\varphi}\bigr).\]
In terms of these coordinates, the generator $\sigma$ of the $\Z_p$-action
as in \eqref{eqn:sigma} acts by
\[ \sigma|_{\tM_2}\co\theta\longmapsto \theta+\frac{2\pi}{p},\;\;\;
\varphi\longmapsto\varphi+\frac{2\pi q}{p}.\]
A fundamental domain for this action is $\{0\leq\theta\leq 2\pi/p\}$,
and the quotient $M_2:=\tM_2/\langle\sigma\rangle$ is again a solid
torus.

The meridian $\tmu_2$ of $\partial\tM_2$ is given by a $\varphi$-curve,
and this descends to a meridian $\mu_2$ of $\partial M_2$. As longitude
$\tlambda_2$ on $\partial\tM_2$ we take a $\theta$-curve, and as longitude
$\lambda_2$ on $\partial M_2$ a curve joining $(\theta_0,\varphi_0):=(0,0)$
with $(\theta_1,\varphi_1):=(2\pi/p,2\pi q/p)$ as shown
in Figure~\ref{figure:lens-heegaard}. Here it is assumed that
$\lambda_2$ does indeed turn through an angle $2\pi q/p$ in meridional
direction, and we need not require $1\leq q\leq p-1$.

The horizontal line segments shown in the figure are two of the $p$
segments that make up $\tlambda_2$, after they have all been mapped to the
fundamental domain. We then see that $\tlambda_2$ descends
to $p\lambda_2-q\mu_2$.

In a similar way one considers $M_1=\tM_1/\langle\sigma\rangle$.
In the parametrisation of $\tM_1$ as a solid torus,
\[ \bigl(\rme^{\rmi\theta},r\rme^{\rmi\varphi}\bigr)\longmapsto
\bigl(r\rme^{\rmi\varphi},\sqrt{2-r^2}\,\rme^{\rmi\theta}\bigr),\]
the roles of $z_1$ and $z_2$ are interchanged, so $\sigma$ acts by
\[ \sigma|_{\tM_1}\co\theta\longmapsto \theta+\frac{2\pi q}{p},\;\;\;
\varphi\longmapsto\varphi+\frac{2\pi}{p}.\]
(Beware that, as before, $\varphi$ is the meridional coordinate, and
$\theta$ the longitudinal one.)
If one replaces the generator $\sigma$ of the $\Z_p$-action by $\sigma^s$,
the action is described by
\[ \sigma^s|_{\tM_1}\co\theta\longmapsto \theta+\frac{2\pi}{p},\;\;\;
\varphi\longmapsto\varphi+\frac{2\pi s}{p}.\]
Now the discussion continues as before, and we see that there is a choice
of longitude $\lambda_1$ on $M_1$ such that $\tmu_1$ descends to $\mu_1$,
and $\tlambda_1$ to $p\lambda_1-s\mu_1$.

\begin{figure}[h]
\labellist
\small\hair 2pt
\pinlabel $\mu_2$ [br] at 230 198
\pinlabel $\lambda_2$ [bl] at 356 101
\pinlabel $\tlambda_2$ [t] at 365 59
\pinlabel $\tlambda_2$ [b] at 108 168
\pinlabel $\theta=0$ [b] at 34 225
\pinlabel $\theta=2\pi/p$ [b] at 467 225
\pinlabel $\varphi=0$ [r] at 0 166
\pinlabel $\varphi=2\pi q/p$ [r] at 0 59
\endlabellist
\centering
\includegraphics[scale=0.4]{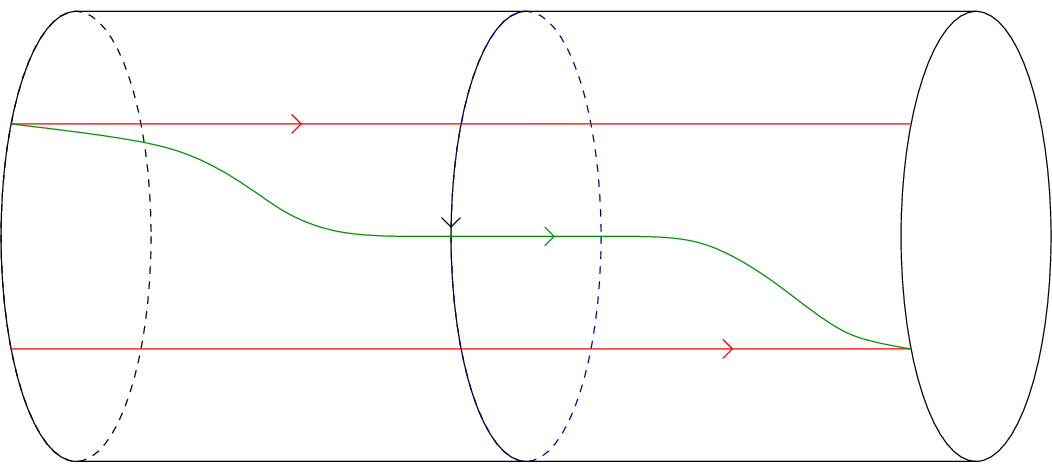}
  \caption{A Heegaard (solid) torus of $L(p,q)$.}
  \label{figure:lens-heegaard}
\end{figure}

In the Heegaard splitting of~$S^3$, the identification of the two solid
tori is determined by
\[ \tmu_1\sim\tlambda_2\;\;\;\text{and}\;\;\;
\tlambda_1\sim\tmu_2.\]
This descends to
\[ \mu_1\sim p\lambda_2-q\mu_2\;\;\;\text{and}\;\;\;
p\lambda_1-s\mu_1\sim\mu_2,\]
which is equivalent to the identifications in the proposition.
\end{proof}
\section{Klein bottles embed into $L(4n,2n\pm 1)$ only}
\label{section:Lonly}
Suppose we have an embedded copy $K\subset L(p,q)$ of the
Klein bottle. Think of $K$ as being obtained by gluing
two M\"obius bands along their boundary. As in the preceding section,
we see that the normal bundle of $K$ in $L(p,q)$ has to be
the unique nontrivial $I$-bundle over either of these M\"obius bands,
where we take $I$ to be the interval $[-1,1]$.
The boundary (in the fibre direction) of this $I$-bundle
is the orientable double cover of the M\"obius band, that is,
an annulus. It follows that the boundary of a tubular neighbourhood
$\nu K$ is a separating $2$-torus $T$, the orientable double
covering of~$K$.

\begin{rem}
\label{rem:pi1}
We shall see presently that the homomorphism $\pi_1(T)\rightarrow
\pi_1(\nu K)$ induced by inclusion is given by
\[ \begin{array}{ccc}
\langle a,b\,|\,ab=ba\rangle & \longrightarrow & \langle u,v\,|\,
                                                 uvu^{-1}=v^{-1}\rangle\\
a^nb^{\ell}                  & \longmapsto     & u^{2n}v^{\ell}.
\end{array} \]
This homomorphism is injective, for otherwise there would be
some relation $u^{2n}v^{\ell}=1$ in $\pi_1(K)$. However, adding such a
relation to the given presentation of $\pi_1(K)$ turns
$u$ into a torsion element of order $2n$ for $\ell=0$, or
$v$ into a torsion element of order~$2\ell$ for $\ell\neq 0$ (since
$uv^{\ell}u^{-1}=v^{-\ell}$ by the first relation, and
$uv^{\ell}u^{-1}=v^{\ell}$ by the second). This would contradict the fact
that $\pi_1(K)$ is torsion-free, as indeed is the
fundamental group of any finite-dimensional $CW$ complex with
a contractible universal covering space \cite[Proposition 2.45]{hatc02}.
\end{rem}
\subsection{The complement of $\nu K$}
We first show that the complement of $\nu K$ in $L(p,q)$ is
a solid torus. This observation has been recorded in various places in the
literature, e.g.~\cite[Lemma~2]{rubi79}.

\begin{lem}
The complement $L(p,q)\setminus\Int(\nu K)$ is a solid torus. In other words,
$L(p,q)$ is a Dehn filling of $\nu K$.
\end{lem}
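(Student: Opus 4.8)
The plan is to analyse the complement $W:=L(p,q)\setminus\Int(\nu K)$ and show it is a solid torus by a combination of a homology computation and the irreducibility of lens spaces. First I would record the basic topological data: $W$ is a compact orientable $3$-manifold with $\partial W=T$ a single $2$-torus, since $\partial(\nu K)$ is the orientable double cover of $K$ as noted above. A half-lives-half-dies argument (half of $H_1(\partial W;\mathbb{Q})$ dies in $H_1(W;\mathbb{Q})$) shows $b_1(W)=1$, so $H_1(W;\mathbb{Z})$ has rank one. Thus $W$ is a rational homology solid torus, and in particular $\partial W\to W$ is $\pi_1$-injective only if... — actually the cleaner route is to avoid $\pi_1$ here and argue as follows.

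Next I would show that $W$ is irreducible. Any embedded $2$-sphere $S\subset W$ is also a separating sphere in the irreducible manifold $L(p,q)$ (irreducibility of lens spaces was invoked in Section~2, via Alexander's theorem and the covering $S^3\to L(p,q)$), so $S$ bounds a ball $B$ in $L(p,q)$; since $K$ is connected and not contained in a ball (its fundamental group is infinite, as recalled in Remark~\ref{rem:pi1}), the neighbourhood $\nu K$ cannot lie inside $B$, hence $B\subset W$, proving $W$ irreducible. Then I would show $\partial W=T$ is incompressible in $W$, \emph{or else} $W$ is a solid torus: if a compressing disc $D\subset W$ exists, compressing $T$ along $D$ yields a $2$-sphere in $W$, which bounds a ball by irreducibility, and reversing the surgery shows $W$ is obtained from that ball by attaching a $1$-handle, i.e.\ $W$ is a solid torus. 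So it remains to rule out the case that $T$ is incompressible in $W$.

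To eliminate the incompressible case I would use the Seifert structure on $\nu K$ together with $\pi_1$. The twisted $I$-bundle $\nu K$ over $K$ is Seifert fibred over a disc with two exceptional fibres of order $2$ (equivalently, $\nu K$ is the orientable twisted $I$-bundle over the Klein bottle, whose interior is also the once-punctured-torus bundle... but the Seifert picture is the clean one), and $T=\partial(\nu K)$ is a horizontal/vertical torus whose $\pi_1$ injects into $\pi_1(\nu K)$ by Remark~\ref{rem:pi1}. If $T$ were incompressible in $W$ as well, then $L(p,q)=\nu K\cup_T W$ would contain an incompressible torus and hence be a Haken or small-Seifert manifold with infinite $\pi_1$ — but every lens space has finite fundamental group. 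More carefully: $\pi_1(L(p,q))$ would contain $\pi_1(T)=\mathbb{Z}^2$ as a subgroup (injectivity on the $\nu K$ side is known; if $T$ is incompressible in $W$ too, van Kampen gives an amalgamated product in which $\pi_1(T)$ embeds), contradicting $|\pi_1(L(p,q))|=p<\infty$. Hence $T$ compresses in $W$, and by the previous paragraph $W$ is a solid torus.

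The main obstacle I expect is the last step: one must be careful that a compressing disc for $T$ genuinely lies on the $W$-side, not the $\nu K$-side, and that the compression-plus-irreducibility argument really outputs a \emph{solid torus} rather than (a priori) a ball with a $1$-handle that could be knotted — but in a $3$-manifold a regular neighbourhood of an arc is always an unknotted $1$-handle locally, so attaching it to a ball always gives the genuine solid torus $S^1\times D^2$. The other subtlety is justifying $b_1(W)=1$ and the $\pi_1$-injectivity of $T\hookrightarrow\nu K$; the latter is exactly Remark~\ref{rem:pi1}, and the former follows either from the long exact sequence of $(L(p,q),\nu K)$ with the Mayer–Vietoris/duality input that a separating torus in a rational homology sphere splits it into two rational homology solid tori. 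Once these are in place, the conclusion that $W$ is a solid torus — equivalently, that $L(p,q)$ is a Dehn filling of $\nu K$ — is immediate.
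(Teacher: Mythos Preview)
Your argument is correct and essentially the same as the paper's: both hinge on the loop theorem to produce a compressing disc for $T$ on the $W$-side (the paper applies it directly in $L(p,q)$ and then invokes Remark~\ref{rem:pi1} to place the disc outside $\nu K$; you instead first prove $W$ irreducible and then rule out incompressibility-in-$W$ via van Kampen and $|\pi_1(L(p,q))|<\infty$), after which one surgers $T$, uses irreducibility plus the impossibility of $K\subset B^3$ to locate the resulting ball correctly, and reverses the surgery. Your opening half-lives-half-dies digression is never used and can simply be deleted.
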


\begin{proof}
The homomorphism $\pi_1(T)\rightarrow\pi_1\bigl(L(p,q)\bigr)$
on fundamental groups induced by the inclusion $T\subset L(p,q)$
cannot be injective, so $T$ is compressible \cite[Corollary~3.3]{hatc07},
that is, there is an embedded disc $D$ in $L(p,q)$ with
$D\cap T=\partial D$ a homotopically nontrivial circle on~$T$.
By Remark~\ref{rem:pi1}, this disc has to lie on the other
side of $T$ than $\nu K$.

Now thicken $D\equiv D\times\{0\}$
to a cylinder $Z:=D\times[-\varepsilon,\varepsilon]$
with $Z\cap T=\partial D\times[-\varepsilon,\varepsilon]$, and replace
this part of $T$ by $D\times\{\pm\varepsilon\}$, thus creating a
$2$-sphere~$S$. This process is called a surgery of $T$ along~$D$.

In an irreducible $3$-manifold (such as a lens space),
this $2$-sphere $S$ bounds a ball~$B$. If $B\cap D=\emptyset$, reversing the
surgery creates a solid torus bounded by~$T$ on the other
side of $T$ than $\nu K$, and we are done.

If $B$ were on the side of $S$ containing $D$, then $B$ would equal
\[ \nu K\cup _{\partial D\times[-\varepsilon,\varepsilon]}
\bigl(D\times [-\varepsilon,\varepsilon]\bigr).\]
In particular, this ball would contain a Klein bottle, which is impossible
by Samelson's theorem.
\end{proof}
\subsection{The fundamental group of the Dehn filling}
Next, we wish to describe an explicit model for
$\nu K$ that will allow us to compute the fundamental group
of the manifold obtained by any Dehn filling of $\nu K$.

We think of $S^1$ as $\R/2\pi\Z$ and write the Klein bottle as
\[ K= \bigl([0,1]\times S^1\bigr)/(1,\theta)\sim(0,-\theta).\]
The tubular neighbourhood $\nu K$ of $K$ in any orientable
$3$-manifold is then given by
\[ \nu K=\bigl([0,1]\times S^1\times[-1,1]\bigr)/
(1,\theta,r)\sim (0,-\theta,-r),\]
where $K\subset\nu K$ is defined by $\{r=0\}$.
Indeed, notice that $K$ decomposes into the two
M\"obius bands
\[ \Bigl([0,1]\times\Bigl[-\frac{\pi}{2},\frac{\pi}{2}\Bigr]
\times\{0\}\Bigr)/\!\sim\]
and
\[ \Bigl([0,1]\times\Bigl[\frac{\pi}{2},\frac{3\pi}{2}\Bigr]
\times\{0\}\Bigr)/\!\sim,\]
and over either M\"obius band the $[-1,1]$-bundle
is nontrivial.

The first homology group of $\nu K$ is $H_1(\nu K)\cong
\Z\oplus\Z_2$, where the $\Z$-summand may be taken to be generated
by $\bigl([0,1]\times\{0\}\times\{0\}\bigr)/\!\sim$,
and the $\Z_2$-summand is generated by $\bigl\{\frac{1}{2}\bigr\}
\times S^1\times\{0\}$. Notice that the latter circle
is isotopic, by sliding it into the $[0,1]$-direction, to
a copy of itself with reversed orientation, so two copies
of this circle bound a cylinder.

Now consider the effect of attaching a solid torus $S^1\times D^2$
to~$\nu K$. To compute the first homology of the
resulting space, it suffices to consider the attaching of a
meridional disc $D:=\{*\}\times D^2$, for the attaching of
a solid torus may be thought of as an attaching of a disc,
followed by the attaching of a $3$-ball.

As remarked before, the boundary of $\nu K$ is a $2$-torus~$T$.
In our model, this is
\[ T=\partial(\nu K)=
\bigl([0,1]\times S^1\times\{\pm 1\}\bigr)/\!\sim.\]
We take $H_1(T)\cong\Z\oplus\Z$ to be generated by
$\bigl([0,1]\times\{0\}\times\{\pm 1\}\bigr)/\!\sim$
and $\bigl\{\frac{1}{2}\bigr\}\times S^1\times\{1\}$.
Then the homomorphism on homology induced
by the inclusion $T\rightarrow\nu K$ is described by
\[ \begin{array}{ccc}
H_1(T)     & \longrightarrow & H_1(\nu K)\\
\Z\oplus\Z & \longrightarrow & \Z\oplus\Z_2\\
(n,\ell)   & \longmapsto     & (2n,\ell\,\mathrm{mod}\,2).
\end{array}\]

The attaching circle $\partial D$ of the meridional disc
represents an element $(n,\ell)\in H_1(T)$, with $n,\ell$ coprime.
Thus, assuming that the corresponding gluing results
in a lens space $L=L(p,q)$, the Mayer--Vietoris sequence becomes
\[ \begin{array}{ccccccc}
H_1(\partial D) & \longrightarrow & H_1(\nu K)
   & \longrightarrow & H_1(L) & \longrightarrow & 0\\
\Z                & \longrightarrow & \Z\oplus\Z_2
   & \longrightarrow & \Z_p   & \longrightarrow & 0\\
1                 & \longmapsto     & (2n,\ell\;\mathrm{mod}\, 2)
   &                 &        &                 & 
\end{array}\]
It follows that $\ell$ must be odd, for the quotient
$(\Z\oplus\Z_2)/\langle(2n,0)\rangle$ equals $\Z_{2n}\oplus\Z_2$,
and not $\Z_p$, as it should. On the other hand, we have
\[ (\Z\oplus\Z_2)/\langle(2n,1)\rangle\cong Z_{4n},\]
since $\Z\oplus\Z_2$ is generated by $(2n,1)$ and $(1,0)$,
and we have $(4n,0)=2(2n,1)$. So with this attaching map
the resulting manifold might indeed be a lens space,
with $p=4n$.

To determine the freedom in choosing~$\ell$, we
compute the fundamental group in a similar fashion.
Let $u,v\in\pi_1(\nu K)$ be the elements corresponding to the very
circles we chose as generators of $H_1(\nu K)$. These yield the
presentation
\[ \pi_1(\nu K)=\langle u,v\,|\,uvu^{-1}=v^{-1}\rangle,\]
and in terms of this presentation, the homomorphism
$\pi_1(T)\rightarrow\pi_1(\nu K)$ induced by inclusion
is as claimed in Remark~\ref{rem:pi1}.
Notice that $u^2v=uv^{-1}u=vu^2$, that is, $u^2$ commutes
with~$v$. So the map in Remark~\ref{rem:pi1}
is indeed a group homomorphism.

Thus, $u^{2n}v^{\ell}$ is the class of the
attaching circle $\partial D$ in $\pi_1(\nu K)$.
By Seifert--van-Kampen we then have
\[ \pi_1(\nu K\cup D)=\langle u,v\,|\,
uvu^{-1}=v^{-1},\, u^{2n}v^{\ell}=1\rangle.\]
Since $\ell\neq 0$, this is a presentation of the metacyclic group
\[ \Z_{2\ell}\rightarrowtail G
\twoheadrightarrow\Z_{2n},\]
which is abelian (and then cyclic of order~$4n$)
if and only if $\ell=\pm 1$, see \cite{gela21}.
The normal subgroup of order $2\ell$ is generated by~$v$. Every element
of $G$ can be written uniquely in the form $u^jv^k$
with $0\leq j\leq 2n-1$ and $0\leq k\leq 2\ell-1$, and the projection
to $\Z_{2n}$ is given by $u^jv^k\mapsto j$.

To summarise, we have shown that the only Dehn fillings of $\nu K$
that might result in a lens space $L(p,q)$ are those described by
an attaching map $(n,\pm 1)\in\Z\oplus\Z=H_1(T)$, and that the
resulting lens space would satisfy $p=4n$.
\subsection{Seifert fibrations}
We now show that the manifolds obtained as a Dehn filling of $\nu K$
described by an attaching map $(n,\pm 1)$ are Seifert fibred in two
different ways. Either fibration will subsequently allow us
to show that the Dehn filling is indeed a lens space
$L(4n,2n\pm 1)$.

Observe that the map $(z_1,z_2)\mapsto
(z_1,\overline{z}_2)$ induces an orientation-reversing
diffeomorphism from $L(4n,2n+1)$ to $L(4n,2n-1)$.
Likewise, there is an orientation-reversing diffeomorphism
of the Dehn fillings corresponding to $(n,1)$ and $(n,-1)$,
respectively, extending the diffeomorphism
$[(t,\theta,r)]\mapsto[(t,-\theta,r)]$ of~$\nu K$.
So it suffices to consider the attaching map
$(n,1)$ only, and we may ignore questions of orientation.
\subsubsection{A Seifert fibration over $\RP^2(n)$}
\label{subsubsection:seifert-rp2}
The first Seifert fibration will yield a very simple description
of an embedded Klein bottle, see Section~\ref{subsection:emb-Seifert}.

\begin{lem}
\label{lem:seifert-rp2}
Given an attaching map described by $(n,1)\in
\Z\oplus\Z=H_1(T)$, the resulting Dehn filling of $\nu K$ has a
Seifert fibration over $\RP^2$ with
one singular fibre of order~$n\in\N$.
\end{lem}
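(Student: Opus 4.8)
The plan is to build the Seifert fibration explicitly on the two pieces $\nu K$ and the attached solid torus, and then check that the fibrations match up on the torus $T$. On the model $\nu K=\bigl([0,1]\times S^1\times[-1,1]\bigr)/(1,\theta,r)\sim(0,-\theta,-r)$, the natural candidate for a fibration is to take the circles $\{t\}\times S^1\times\{r\}$ for $t\in(0,1)$, which glue up across $t=0\sim 1$ since $(\theta,r)\mapsto(-\theta,-r)$ is an orientation-preserving involution of $S^1\times[-1,1]$. At $t=0$ the fibre $\{0\}\times S^1\times\{0\}$ is covered twice by the nearby fibres (it is the spine of the Möbius-band core), so it is an exceptional fibre of multiplicity~$2$; likewise the two circles $\{0\}\times S^1\times\{\pm 1\}$ in $T$ get identified to one circle traversed twice. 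The base of this fibration of $\nu K$ is the quotient of $[0,1]\times[-1,1]$ by $(1,r)\sim(0,-r)$, which is a Möbius band, with the single cone point of order~$2$ coming from the $t=0$ fibre. I would describe the fibres in the generators $u,v$ of $\pi_1(\nu K)$: a regular fibre is isotopic to $v$ (an $S^1$-curve at fixed $t\neq 0,1$), while $u$ corresponds to the $[0,1]\times\{0\}\times\{0\}$ arc; on $T$, using the basis from the previous subsection, the regular fibre is the class $(0,1)$ and a section-type curve is $(1,0)$ (up to the usual ambiguity of adding fibres).

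Next I would attach the solid torus along $(n,1)\in H_1(T)$. The key computation is to see how the regular fibre $(0,1)$ sits relative to the meridian $(n,1)$ of the filling torus: since $\det\begin{pmatrix} n & 0\\ 1 & 1\end{pmatrix}=n$, the regular fibre wraps $n$ times longitudinally around the new solid torus as one goes once around its meridian, so the Seifert fibration extends over the filling torus with a new exceptional fibre of multiplicity~$n$ (or a regular fibre if $n=1$). Concretely, one foliates $S^1\times D^2$ by the $(n, *)$-torus curves, which is the standard fibred solid torus of type $n$, and the requirement is exactly that its meridian $(n,1)$ be glued to the curve on $T$ that bounds a disc on the solid-torus side — which is automatic by construction of the Dehn filling. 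Thus the resulting closed manifold is Seifert fibred over the base obtained by capping the Möbius-band base of $\nu K$ with a disc, i.e.\ over $\RP^2$, with exceptional fibres of multiplicities $2$ and $n$; when $n=1$ the second one disappears and we get a genuine exceptional fibre only of order~$2$, but we may still record it as "$\RP^2$ with one singular fibre of order~$n$" if we allow $n=1$ to mean no singular fibre, or simply state the multiplicity is $n\in\N$ as in the lemma.

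I expect the main obstacle to be bookkeeping the basis change on $T$ correctly: the generators of $H_1(T)$ fixed in the previous subsection are $\bigl([0,1]\times\{0\}\times\{\pm 1\}\bigr)/\!\sim$ and $\bigl\{\tfrac12\bigr\}\times S^1\times\{1\}$, and I must verify that the regular fibre of the $\nu K$-fibration is the second of these (class $(0,1)$) and that the $t$-arc maps to the first (class $(1,0)$) — in particular that no extra fibre-twisting slips in when passing from the interior fibration to its restriction on the boundary. Once that identification is pinned down, the determinant computation $\det\begin{pmatrix} n & 0 \\ 1 & 1\end{pmatrix}=n$ immediately gives the multiplicity of the new exceptional fibre, and the fact that capping a Möbius band with a disc yields $\RP^2$ finishes the argument. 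A secondary, but easy, point to address is why the $t=0$ fibre really is exceptional of multiplicity exactly $2$ and not more: this follows because the monodromy involution $(\theta,r)\mapsto(-\theta,-r)$ has order $2$ and acts freely on the regular fibres near $t=0$, so the local model there is the standard fibred solid torus of type~$2$.
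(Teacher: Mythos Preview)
Your fibration on $\nu K$ is exactly the one the paper uses --- the $\theta$-circles $\{t\}\times S^1\times\{r\}$, with projection $(t,\theta,r)\mapsto(t,r)$ to the M\"obius band --- but your claim that the fibre over $[(0,0)]$ is exceptional of multiplicity~$2$ is wrong, and this error carries through to your conclusion. The identification $(1,\theta,r)\sim(0,-\theta,-r)$ glues the circle $\{1\}\times S^1\times\{r\}$ \emph{as a whole} to the circle $\{0\}\times S^1\times\{-r\}$ via the diffeomorphism $\theta\mapsto-\theta$; no point of any fibre is identified with another point of the \emph{same} fibre, so every fibre is regular. What you have is simply the nontrivial $S^1$-bundle over the M\"obius band: the orientation-reversing monodromy on the $S^1$-fibre is what makes the bundle nontrivial, it does not produce a cone point in the base. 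Your last-paragraph justification, that the order-$2$ monodromy yields a ``fibred solid torus of type~$2$'' near $t=0$, conflates a mapping-torus gluing with a quotient by a group action --- there is no isotropy here, and the local model near every fibre is the trivially fibred solid torus. (Likewise, the two boundary circles $\{0\}\times S^1\times\{\pm1\}$ are not ``one circle traversed twice'': they sit over distinct points of the boundary of the M\"obius band and are distinct regular fibres in~$T$.)

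Once the spurious order-$2$ fibre is removed, your argument does match the paper's and proves the lemma: capping the M\"obius-band base with a disc gives~$\RP^2$, and your determinant computation correctly shows that the fibre class $(0,1)$ meets the filling meridian $(n,1)$ in $n$ points, so the solid torus contributes exactly one singular fibre of order~$n$. As written, however, your stated outcome --- base $\RP^2$ with singular fibres of orders $2$ and~$n$ --- contradicts the very statement you are proving, and the fix is not a convention about $n=1$ but the removal of the phantom fibre. For orientation, the paper does later exhibit a \emph{second} Seifert fibration of the same manifold, over $S^2(2,2)$, with two order-$2$ singular fibres; that one takes curves in the $t$-direction as fibres and is a genuinely different structure, not a variant of the one you describe.
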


\begin{proof}
The tubular neighbourhood $\nu K$ equals the total space
of the nontrivial $S^1$-bundle over the M\"obius band;
the bundle projection is induced by the map
\[ \begin{array}{ccc}
[0,1]\times S^1\times [-1,1] & \longrightarrow &
[0,1]\times [-1,1] \\
(t,\theta,r)                 & \longmapsto     &
(t,r),
\end{array} \]
which descends to the quotients under the identification
$(1,\theta,r)\sim(0,-\theta,-r)$ and
$(1,r)\sim (0,-r)$, respectively. With
our choice of generators for $H_1(T)$, the class of the $S^1$-fibre
$\bigl\{\frac{1}{2}\bigr\}\times S^1\times\{1\}$
in $\partial(\nu K)=T$ is $(0,1)\in\Z\oplus\Z$. Notice that the restriction
of the $S^1$-bundle to the boundary of the M\"obius band is the
trivial bundle $T\rightarrow[0,1]\times\{\pm 1\}/\!\sim$.

We now look at the gluing map $\partial(S^1\times D^2)
\rightarrow T$. By assumption, the
meridian $\mu:=\{*\}\times \partial D^2\subset
\partial (S^1\times D^2)$ is mapped
to a curve in $T$ representing the class
$(n,1)\in H_1(T)$.
The longitude $\lambda:=S^1\times\{*\}\subset
\partial (S^1\times D^2)$ of the solid torus we glue in
has to map to a curve that forms a basis of
$H_1(T)$ together with the image of~$\mu$;
the most simple choice is $\lambda\sim (1,0)$.

With these choices, the fibre class $(0,1)$
is identified with $\mu-n\lambda$. The foliation
of $\partial(S^1\times D^2)$ by simple closed curves
in this class $\mu-n\lambda$ extends radially in the obvious
fashion to a Seifert fibration of
$S^1\times D^2$ over $D^2$ with one singular fibre
(the spine $S^1\times\{0\}$) of order~$n$.
\end{proof}

In \cite{gela21} it was shown that these Seifert fibred manifolds
are precisely the lens spaces $L(4n,2n\pm 1)$.
\subsubsection{A Seifert fibration over $S^2(2,2)$}
The second Seifert fibration will be shown to
translate into a genus $1$ Heegaard splitting
of the manifold, from which one can read off directly that
the Dehn filling is $L(4n,2n\pm 1)$.

\begin{lem}
The Dehn filling of $\nu K$ resulting from an attaching map described
by $(n,1)\in\Z\oplus\Z=H_1(T)$ has a Seifert fibration over $S^2$ with
two singular fibres of order~$2$.
\end{lem}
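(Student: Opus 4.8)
The plan is to produce a second Seifert fibration directly on $\nu K$ whose base orbifold is a disc with two cone points of order~$2$, and then to verify that the prescribed Dehn filling extends this fibration without introducing a third singular fibre.

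First I would describe the fibration on $\nu K$. In the model $\nu K=\bigl([0,1]\times S^1\times[-1,1]\bigr)/(1,\theta,r)\sim(0,-\theta,-r)$, I take as fibres the images of the arcs $t\mapsto(t,\theta_0,r_0)$, $t\in[0,1]$. Under the gluing, the arc through $(\theta_0,r_0)$ is joined end-to-start with the arc through $(-\theta_0,-r_0)$, so a generic fibre is an embedded circle running through the $[0,1]$-factor twice, whereas for $(\theta_0,r_0)\in\{(0,0),(\pi,0)\}$ the arc $t\mapsto(t,0,0)$ already closes up after one pass and yields an exceptional fibre of order~$2$. The leaf space is the annulus $S^1\times[-1,1]$ (coordinates $\theta,r$) modulo the orientation-preserving involution $(\theta,r)\mapsto(-\theta,-r)$, which has exactly the two fixed points $(0,0),(\pi,0)$ and interchanges the two boundary circles; by Riemann--Hurwitz for the associated branched double cover (or by a direct Euler-characteristic count, $\chi^{\mathrm{orb}}=0$) its quotient is a disc with two cone points of order~$2$. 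A conceptually clean way to see all of this is to pass to the free double cover $T^2\times[-1,1]\to\nu K$ given by $(s,\theta,r)\mapsto(s+1,-\theta,-r)$ with $s\in\R/2\Z$: the fibration lifts to the product foliation of $T^2\times[-1,1]$ by $s$-circles, and the quotient computation is exactly the one just described.

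Next I would pin down the regular fibre on $T=\partial(\nu K)$ in homology. With the generators $a=\bigl([0,1]\times\{0\}\times\{\pm1\}\bigr)/\!\sim$ and $b=\{\tfrac12\}\times S^1\times\{1\}$ of $H_1(T)$ fixed in the previous subsections, the fibre of the new fibration restricted to $T$ is precisely the class $a=(1,0)$ (it is one of the $s$-circles, up to isotopy), whereas $b=(0,1)$ was the fibre of the $\RP^2$-fibration. Since the attaching map sends the meridian $\mu$ of the filling solid torus to $na+b=(n,1)$, its algebraic intersection number with the fibre is $\mu\cdot a=(na+b)\cdot a=b\cdot a=\pm1$. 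For simple closed curves on a torus, intersection number $\pm1$ forces geometric intersection number~$1$; equivalently one may choose the longitude of the filling torus to map onto $a$, which is admissible because $\det\bigl(\begin{smallmatrix}n&1\\1&0\end{smallmatrix}\bigr)=-1$. Hence the Seifert fibration extends over the glued-in solid torus with the latter fibred by the circles $S^1\times\{*\}$, its core a regular fibre of multiplicity~$1$, and no new singular fibre appears.

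Therefore the base orbifold of the Dehn filling is obtained from the disc with two order-$2$ cone points by capping off its boundary with an (unramified) disc, i.e.\ it is $S^2$ with two cone points of order~$2$, and the two exceptional fibres survive. The step I expect to be the main obstacle is the first one: correctly exhibiting the second Seifert fibration of $\nu K$ and checking that its base orbifold really is $D^2(2,2)$ --- that the two exceptional fibres have order exactly~$2$ and that no extra cone point is concealed --- together with identifying the fibre class on $T$ precisely enough that the intersection number $\mu\cdot a=\pm1$, which is the actual reason the filling creates no third singular fibre, comes out right.
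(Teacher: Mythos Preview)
Your proposal is correct and follows essentially the same route as the paper: the same $t$-direction fibration of $\nu K$, the same identification of the quotient $S^1\times[-1,1]/(\theta,r)\sim(-\theta,-r)$ as $D^2(2,2)$, and the same recognition that the regular fibre on $T$ lies in the class $(1,0)$, so that the $(n,1)$-filling extends the fibration as a product. The only difference is cosmetic: where the paper simply cites its earlier choice $\lambda\sim(1,0)$ to see that the core of the filling torus is regular, you give the equivalent intersection-number/determinant argument, and you add the (unnecessary but harmless) double-cover description.
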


\begin{proof}
In the model
\[ \nu K=\bigl([0,1]\times S^1\times[-1,1]\bigr)/
(1,\theta,r)\sim (0,-\theta,-r),\]
the pairs of intervals $[0,1]\times\{\theta\}\times\{r\}$
and $[0,1]\times\{-\theta\}\times\{-r\}$, for $(\theta,r)\not\in
\bigl\{(0,0),(\pi,0)\bigr\}$, define circles of length~$2$.
The two exceptional intervals $[0,1]\times\{0\}\times\{0\}$
and $[0,1]\times\{\pi\}\times\{0\}$ define circles of length~$1$. This
foliation by circles defines a Seifert fibration of $\nu K$ with quotient
\[ S^1\times[-1,1]/(\theta,r)\sim(-\theta, -r),\]
which equals $D^2(2,2)$, the disc with two orbifold points of order~$2$,
see Figure~\ref{figure:D22}.

\begin{figure}[h]
\labellist
\small\hair 2pt
\pinlabel $/\!\sim$ [b] at 248 109
\pinlabel $\cong$ [b] at 542 109
\endlabellist
\centering
\includegraphics[scale=0.4]{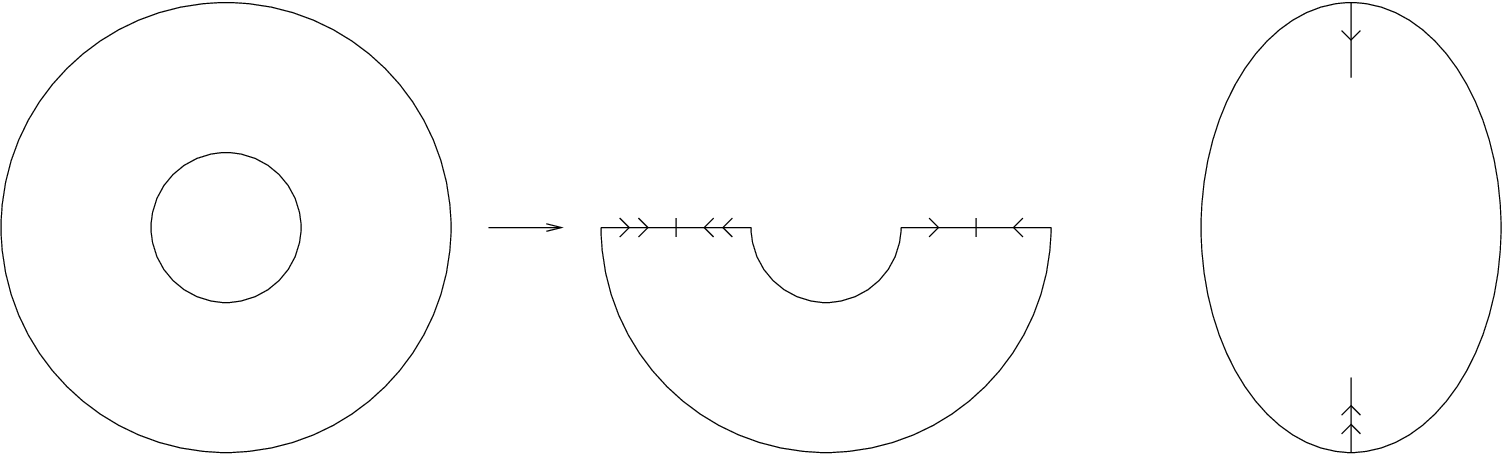}
  \caption{The base orbifold $D(2,2)$.}
  \label{figure:D22}
\end{figure}

On the boundary $T$ of $\nu K$, the Seifert fibres lie in the
class $(1,0)\in\Z\oplus\Z=H_1(T)$. As seen in the proof of
Lemma~\ref{lem:seifert-rp2}, this class becomes identified with
the longitude $\lambda$ of the solid torus producing the Dehn filling,
so the Seifert fibration extends as the  product fibration of
$S^1\times D^2$. Therefore, the Dehn filling results
in a Seifert fibration over $S^2(2,2)$.
\end{proof}

\begin{rem}
More generally, it is shown in \cite{rubi79} that
the irreducible $3$-manifolds with finite fundamental
group that contain Klein bottles
are precisely the Seifert fibrations over $S^2$
with at most three singular fibres of multiplicity $2,2,p$, respectively,
for some $p\in\N$.
\end{rem}

The Seifert fibration, restricted to two hemispheres $D^2(2)$ of
$S^2(2,2)$, each containing one of the two orbifold points, defines
a Heegaard splitting of genus~$1$. This shows that the Dehn filling
produces a lens space, which we now want to identify.

\begin{prop}
The Dehn filling of $\nu K$ with the meridian of the solid torus
glued to $(n,1)\in\Z\oplus\Z=H_1(T)$ is the lens space $L(4n,2n\pm 1)$.
\end{prop}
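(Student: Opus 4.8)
The plan is to build on the Seifert fibration over $S^2(2,2)$ established in the preceding lemma and turn it into an explicit genus~$1$ Heegaard splitting, from which Proposition~\ref{prop:lens-heegaard} identifies the lens space. First I would split the base orbifold $S^2(2,2)$ into two discs $D^2(2)$, each carrying one orbifold point of order~$2$, glued along a boundary circle. Over each such disc the Seifert fibration is a solid torus: the preimage of $D^2(2)$ is $S^1\times D^2$ fibred with a single exceptional fibre of order~$2$, exactly as in the model computation of Lemma~\ref{lem:seifert-rp2} but with $n$ replaced by~$2$. So I get two solid tori $V_1,V_2$ with $L=V_1\cup_T V_2$, and the task is to compute the gluing matrix with respect to meridian–longitude bases on each, then compare with~\eqref{eqn:matrix}.

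The key bookkeeping step is to track how the three relevant curves on the common boundary torus $T$ --- the regular Seifert fibre $f$, and the two meridians $\mu_1$ (bounding a disc in $V_1$) and $\mu_2$ (bounding a disc in $V_2$) --- sit relative to the basis of $H_1(T)\cong\Z\oplus\Z$ used throughout Section~\ref{section:Lonly}. From the proof of the last lemma, $f$ is the class $(1,0)$. The meridian $\mu$ of the Dehn-filling solid torus is $(n,1)$ by hypothesis, and on the other side of the splitting circle the meridian of the $D^2(2)$-piece coming from $\nu K$ can be read off from the model: a meridional disc there is a fibrewise disc over an arc crossing the orbifold point, which translates into the class $(2,-1)$ or a similarly small class in $H_1(T)$ (this is the place where the "order~$2$" exceptional fibre enters, giving the $2$ rather than $n$). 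I would fix the precise vectors by the requirement that each $\mu_i$ together with $f$ spans $H_1(T)$, pin down the longitudes as the "most simple" complementary curves as in Lemma~\ref{lem:seifert-rp2}, and then simply write out the change-of-basis matrix expressing $(\mu_1,\lambda_1)$ in terms of $(\mu_2,\lambda_2)$.

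The final step is to read off $(p,q)$ from the top row of that matrix, check that $\det=-1$ (orientation-reversing, as demanded by the remark after Proposition~\ref{prop:lens-heegaard}), and conclude $p=4n$, $q=2n\pm 1$; the sign ambiguity is exactly the $\pm$ we have already allowed ourselves to ignore via the orientation-reversing diffeomorphism of Subsection~\ref{subsection:emb-Seifert}'s preamble (the paragraph about $(z_1,z_2)\mapsto(z_1,\overline z_2)$). As a sanity check I would verify $p=4n$ against the earlier homology computation $(\Z\oplus\Z_2)/\langle(2n,1)\rangle\cong\Z_{4n}$, and check the fundamental group against the metacyclic presentation $\langle u,v\mid uvu^{-1}=v^{-1},\,u^{2n}v^{\pm1}=1\rangle$, which is cyclic of order~$4n$.

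I expect the main obstacle to be purely one of careful normalisation: getting the two meridian classes and the two longitude classes expressed in a \emph{single consistent} basis of $H_1(T)$, with correct signs and orientations, so that the resulting $2\times2$ integer matrix genuinely has the form in~\eqref{eqn:matrix} after a change of longitude basis. The geometry is already done; the risk is an off-by-sign or an unnoticed basis change that would spuriously produce $L(4n,q)$ with the wrong $q\bmod 4n$. Matching against the independently computed $\pi_1$ and $H_1$ above is what makes this risk manageable.
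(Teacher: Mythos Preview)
Your overall strategy---use the Seifert fibration over $S^2(2,2)$ to produce a genus~$1$ Heegaard splitting and read off the gluing---is exactly the paper's. But there is a genuine error in how you set it up, and it would lead to the wrong lens space.

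You treat $T=\partial(\nu K)$ as the Heegaard torus, with the Dehn-filling solid torus on one side and ``the $D^2(2)$-piece coming from $\nu K$'' on the other. That is not the picture. Under the $S^2(2,2)$-fibration of the preceding lemma, $\nu K$ fibres over $D^2(2,2)$: \emph{both} singular fibres of order~$2$ lie inside $\nu K$, while the Dehn-filling solid torus carries the product fibration over a smooth disc. Hence any splitting of $S^2(2,2)$ into two copies of $D^2(2)$ must separate the two orbifold points, and the preimage of the separating circle is \emph{not}~$T$. Your candidate meridians $(n,1)$ and ``$(2,-1)$ or similar'' live on the wrong torus; if you run the change-of-basis you outline with those vectors you get $\mu_1\sim(n+2)\lambda_2-\mu_2$, i.e.\ $L(n+2,1)$, not $L(4n,2n\pm1)$.

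What the paper does is take $V_1,V_2$ to be small tubular neighbourhoods of the two singular fibres \emph{inside} $\nu K$. The complement $\bigl(\nu K\setminus(V_1\cup V_2)\bigr)\cup(S^1\times D^2)$ is then a thickened $2$-torus, and the whole computation is an explicit isotopy of the meridian $\mu_1\subset\partial V_1$ across this collar to a curve on $\partial V_2$. The crucial point you are missing is that this isotopy passes through $T$ and across the Dehn-filling solid torus: two strands of the isotoped curve hit $T$ in the class $(0,1)$, become $\mu-n\lambda$ on $\partial(S^1\times D^2)$, and are then pushed across the filling to $-n\lambda$ each. It is precisely here that the parameter $n$ enters and produces the $4n$ and $2n+1$ in the final identification $\mu_1\sim -4n\lambda_2-(2n+1)\mu_2$. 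Your ``small class'' guess for the second meridian bypasses this step entirely, which is why the $n$-dependence never shows up in your matrix. The normalisation worry you flag at the end is real, but the deeper issue is the misidentification of the Heegaard torus with~$T$.
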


\begin{proof}
In $\nu K$ we consider tubular neighbourhoods $V_1,V_2$ of the
two singular fibres over the orbifold points $(0,0)$ and $(\pi,0)$
in $S^1\times[-1,1]/\!\sim$. In Figure~\ref{figure:orbifold-heegaard}
these tubular neighbourhoods are represented as holes in a slice
$\{t_0\}\times S^1\times[-1,1]$, invariant under the action
$(\theta,r)\mapsto (-\theta,-r)$ on $S^1\times[-1,1]$.
Here the outer boundary of the annulus is given by $\{r=1\}$,
the inner boundary, by $\{r=-1\}$, and the angular coordinate
$\theta$ is the usual one in the euclidean plane.

\begin{figure}[h]
\labellist
\small\hair 2pt
\pinlabel $\mu_1$ [bl] at 409 280
\pinlabel $\mu_1'$ [br] at 352 280
\endlabellist
\centering
\includegraphics[scale=0.35]{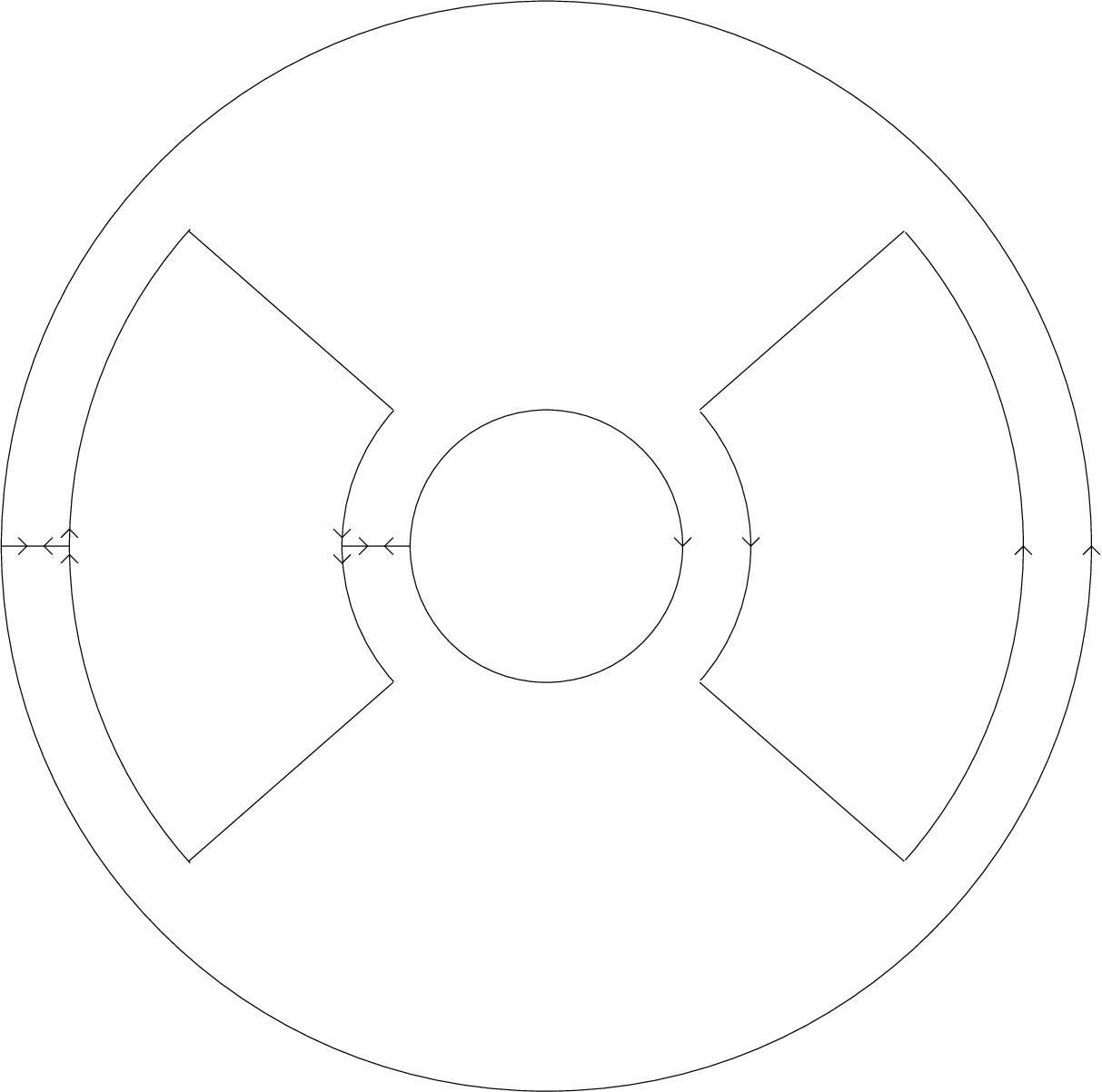}
  \caption{The neighbourhoods of the two singular fibres.}
  \label{figure:orbifold-heegaard}
\end{figure}

The complement of $V_1\cup V_2$ in $\nu K$, together with the solid
torus $S^1\times D^2$ producing the Dehn filling, is a thickened $2$-torus.
In order to identify the lens space resulting from the
Heegaard splitting, it suffices to consider a meridian $\mu_1$ on
$\partial V_1$ (in Figure~\ref{figure:orbifold-heegaard} on the
right), and isotope it in this thickened $2$-torus to a curve
on~$\partial V_2$. Notice that we may think homologically, since
two simple closed curves on a surface are isotopic if and
only if they are homotopic (Baer's theorem); and on a $2$-torus,
with abelian fundamental group, homotopy of curves equals homological
equivalence.

In $\nu K\setminus (V_1\cup V_2)$, the meridian $\mu_1$
is isotopic to the curve $\mu_1'$ as shown. Notice that the
two oriented circles in $\mu_1'$ on the boundary $T$ of $\nu K$
correspond to the same class $(0,1)\in H_1(T)$. On $\partial(S^1\times D^2)$,
each of these curves becomes identified with $\mu-n\lambda$,
see the proof of Lemma~\ref{lem:seifert-rp2}. Since we are now allowed
to isotope the curves over $S^1\times D^2$, these are isotopic to two copies
of $-n\lambda$. The class $-2n\lambda$, in turn, is identified
with $(-2n,0)\in H_1(T)$.

Next we need to recall that the class $(1,0)\in H_1(T)$ is
represented by a regular Seifert fibre. In the
cylinder over the disc with three holes shown in
Figure~\ref{figure:orbifold-heegaard}, this corresponds to two
intervals $[0,1]\times\{\theta_0\}\times\{r_0\}$
and $[0,1]\times\{-\theta_0\}\times\{-r_0\}$.

In $\nu K\setminus (V_1\cup V_2)$,
any two such Seifert fibres are isotopic. On $\partial V_i$, this is a
curve going twice in longitudinal direction, and we may choose
the longitude $\lambda_2$ on $\partial V_2$ such that this
becomes $2\lambda_2+\mu_2$, since the $V_i$ are cylinders over a disc
with bottom and top glued by a rotation of the disc through an angle~$\pi$.

In conclusion, the two circles $\mu_1'\cap T$
are isotopic in the thickened $2$-torus to
curves on $\partial V_2$ representing (in total)
the class $-2n(2\lambda_2+\mu_2)$. In addition, as we
see from Figure~\ref{figure:orbifold-heegaard}, there is
a copy of $-\mu_2$ in $\mu_1'$. Thus, the Heegaard splitting of the
lens space resulting from the Dehn filling of $\nu K$ has the
identification
\[ \mu_1\sim -\mu_2-2n(2\lambda_2+\mu_2)=-4n\lambda_2-(2n+1)\mu_2,\]
which is the description of $L(4n,-(2n+1))=L(4n,2n-1)$.
\end{proof}
\section{Explicit embeddings of Klein bottles}
Before we turn to lens spaces, for completeness we record a simple
construction of an embedding of the Klein bottle in $S^1\times S^2$.
One of the four explicit embeddings in $L(4n,2n\pm 1)$ will
be based on the same idea.

\begin{ex}
\label{ex:Moebius}
An embedding of the M\"obius band into a solid torus
$S^1\times D^2$ with boundary mapping to $2\lambda\pm\mu$
is given by
\[ \begin{array}{ccc}
([0,1]\times[-1,1]\bigr)/(1,r)\sim(0,-r) & \longrightarrow & S^1\times D^2\\
{[(t,r)]}                                  & \longmapsto     &
   \bigl(\rme^{2\pi\rmi t},r\rme^{\pm\pi\rmi t}\bigr);
\end{array}\]
see Figure~\ref{figure:mb-torus-plus}, which shows the embeddding
with boundary curve $2\lambda+\mu$.
The manifold $S^1\times S^2$ is obtained from two copies
of $S^1\times S^2$ via the identification $\mu_1\sim \mu_2$
and $\lambda_1\sim\lambda_2$. Thus, two M\"obius bands
embedded as described (with the same choice of sign)
will glue together to yield a Klein bottle
in $S^1\times S^2$.
\end{ex}

\begin{figure}[h]
\labellist
\small\hair 2pt
\endlabellist
\centering
\includegraphics[scale=0.6]{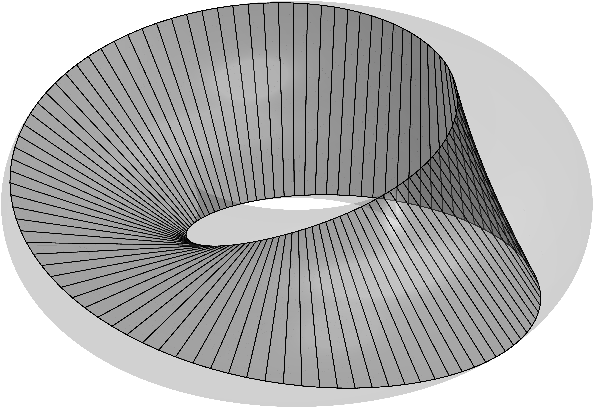}
  \caption{A M\"obius band in a solid torus with boundary
           $2\lambda+\mu$.}
  \label{figure:mb-torus-plus}
\end{figure}

Next we describe four explicit realisations of an embedding
$K\subset L(4n,2n\pm 1)$.
We mention in passing that all embeddings of $K$ in a given lens space $L$
are in fact isotopic, as was proved by Rubinstein~\cite{rubi78}.
A simple homological argument, using the fact that
$H_2(L)=0$, shows that an embedded
nonorientable surface of minimal genus must be incompressible,
and then \cite[Theorem~12]{rubi78} contains the isotopy statement.
\subsection{Embedding into a Seifert fibration}
\label{subsection:emb-Seifert}
In Section~\ref{subsubsection:seifert-rp2} we observed that an embedded Klein
bottle in a lens space leads to a Seifert fibration over $\RP^2(n)$,
the projective plane with one orbifold point of order~$n$, for
some $n\in\N$. This argument can be reversed.

From \cite{gela21} we know that the lens spaces $L(4n,2n\pm 1)$
admit a Seifert fibration over $\RP^2(n)$. Restricted to a
simple closed curve in $\RP^2(n)$ (disjoint from the orbifold point)
along which the orientation of the projective plane is reversed,
this Seifert bundle is the nontrivial $S^1$-bundle over~$S^1$,
which is a Klein bottle.
\subsection{Embedding two M\"obius bands}
According to Proposition~\ref{prop:lens-heegaard}, one
can find a Heegaard splitting of $L(4n,2n\pm 1)$ with
$r=n$ and $s=-(2n\mp 1)$ in~\eqref{eqn:matrix}.
Then
\[ 2\lambda_1+\mu_1\sim 2\bigl(-(2n\mp 1)\lambda_2+n\mu_2\bigr)+
\bigl(4n\lambda_2-(2n\pm 1)\mu_2\bigr)=\pm (2\lambda_2-\mu_2).\]
Hence, if we embed a M\"obius band in either Heegaard torus as in
Example~\ref{ex:Moebius}, but now with the opposite choice of signs,
they glue to a Klein bottle in the lens space.
\subsection{Embedding a handle decomposition}
The Klein bottle has a handle decomposition with one
$0$-handle, two twisted $1$-handles, and a single $2$-handle.
One may now try to place the $0$-handle as a meridional disc
in one Heegaard torus of $L(4n,2n\pm 1)$, the $1$-handles
on the boundary $2$-torus, and complete with a meridional
disc in the complementary Heegaard torus.

For the lens space $L(4,1)$, such an embedding has
been described in \cite[p.~460]{lrs15}. For $L(4n,2n\pm 1)$
with $n\geq 2$ one needs to modify the above idea. After attaching
the first $1$-handle, one has to push the handlebody slightly
into the Heegaard torus, keeping its boundary fixed on the splitting torus.
Only then one can attach the second $1$-handle. We shall presently
describe this in detail.

A similar description can be found in the unpublished note~\cite{iwak09}
by M.~Iwakura. Beware that on the arXiv this paper is listed under the
title `Geometrically incompressible non-orientable
closed surfaces in lens spaces', but the file that opens actually
carries the title `Non-orientable fundamental surfaces in lens spaces',
as listed in our references. There is a published paper
carrying that second title, under the joint authorship of Iwakura with
C.~Hayashi; this is \emph{not} the paper we are referring to.
The paper \cite{tsau92} seems to contain similar ideas, but not the
construction we are about to present.

We illustrate the splitting $2$-torus in the Heegaard decomposition
of $L(4n,2n\pm 1)$ by a square with opposite sides identified.
The horizontal direction corresponds to the meridional
direction~$\mu_2$; the vertical, to~$\lambda_2$. First we attach
a twisted $1$-handle to a meridional disc in
the Heegaard torus $M_2$ as shown in Figure~\ref{figure:first-handle}.
The horizontal line in the centre of the square represents
a meridian, i.e.\ the boundary of a meridional disc. The
grey band is the $1$-handle attached to this disc. (In spite of the
optical illusion, the segments where the $1$-handle intersects the
top and the bottom of the square really do match.) You may also
want to take a peek at Figure~\ref{figure:L41}.

\begin{figure}[h]
\labellist
\small\hair 2pt
\pinlabel $\mu_2$ [t] at 464 0
\pinlabel $\lambda_2$ [r] at 0 464
\endlabellist
\centering
\includegraphics[scale=0.3]{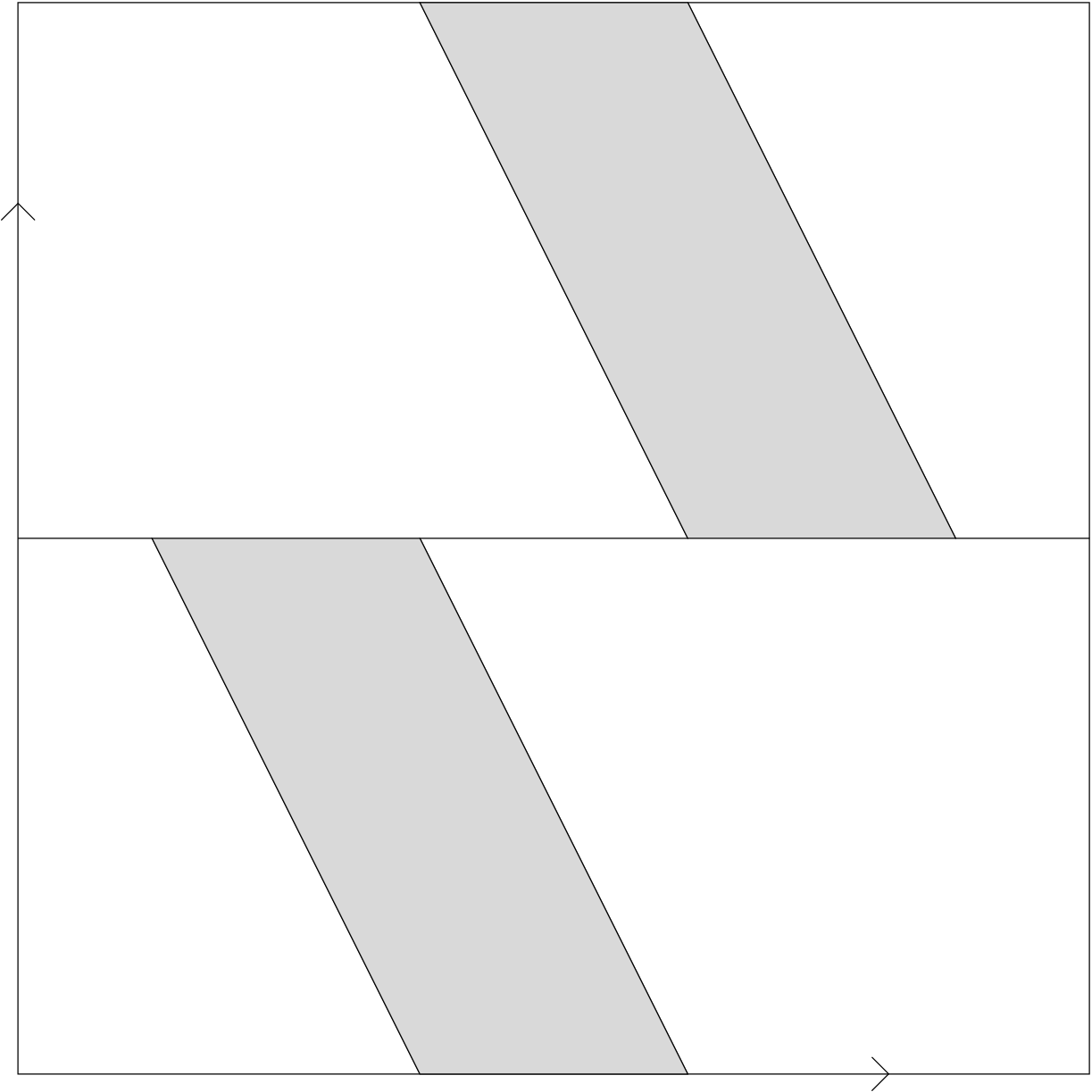}
  \caption{The first $1$-handle on the splitting $2$-torus.}
  \label{figure:first-handle}
\end{figure}

The second $1$-handle we draw as a single curve;
imagine this curve as being thickened into a band.
For $L(4,3)$ we attach the second $1$-handle as shown in
Figure~\ref{figure:L43}. The boundary of the resulting
$1$-handlebody is a curve that intersects $\mu_2$ in four points,
and $\lambda_2$ in three. Taking orientations into account, this
shows that the boundary curve lies in the class $4\lambda_2-3\mu_2$,
which by \eqref{eqn:matrix} becomes identified with $\mu_1$. So we
can complete the $1$-handlebody to a Klein bottle by attaching a meridional
disc of $M_1$ as a $2$-handle.

\begin{figure}[h]
\labellist
\small\hair 2pt
\pinlabel $\mu_2$ [t] at 464 0
\pinlabel $\lambda_2$ [r] at 0 464
\endlabellist
\centering
\includegraphics[scale=0.3]{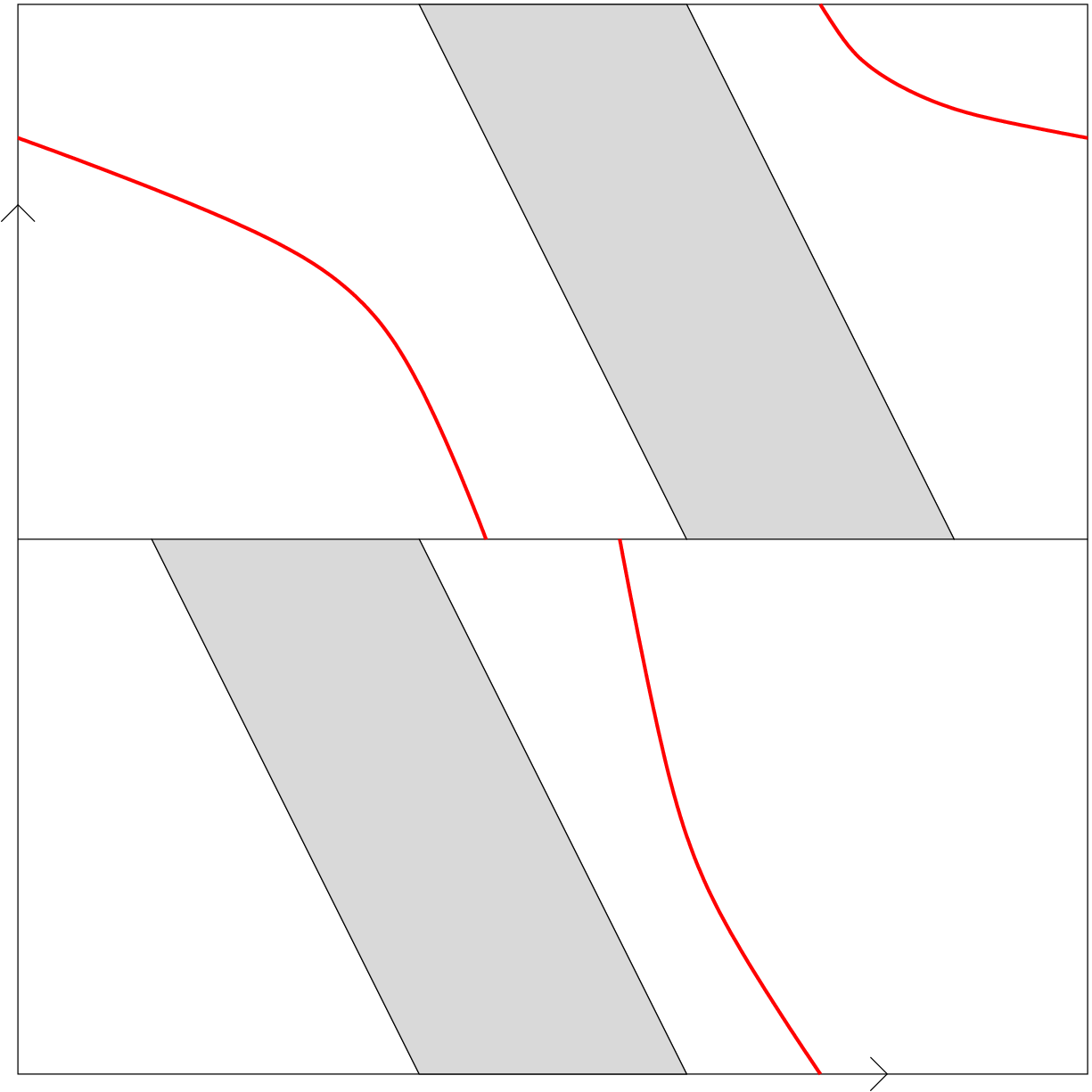}
  \caption{The second $1$-handle for $L(4,3)$.}
  \label{figure:L43}
\end{figure}

For $L(4n,2n+1)$ with $n\geq 2$ we need to modify this construction.
Again we start with the $1$-handlebody obtained by attaching a single
twisted $1$-handle to a meridional disc in~$M_2$.
Next we push the interior of this $1$-handlebody slightly into~$M_2$,
keeping the boundary curve on $\partial M_2$ fixed. This will
allow us to attach a second $1$-handle, lying entirely
in $\partial M_2$, as long as we stay away from the
boundary curve of the handlebody made up of the meridional disc
and the first $1$-handle, except at the ends of the second $1$-handle,
which we attach to the meridional disc.
For $L(4n,2n+1)$ we take the second $1$-handle as shown
in Figure~\ref{figure:second-handle+}, which illustrates
the general principle by the case $n=5$. Notice that the
second $1$-handle passes $n-1$ times over
the (original) first $1$-handle.

\begin{figure}[h]
\labellist
\small\hair 2pt
\pinlabel $\mu_2$ [t] at 464 0
\pinlabel $\lambda_2$ [r] at 0 455
\endlabellist
\centering
\includegraphics[scale=0.3]{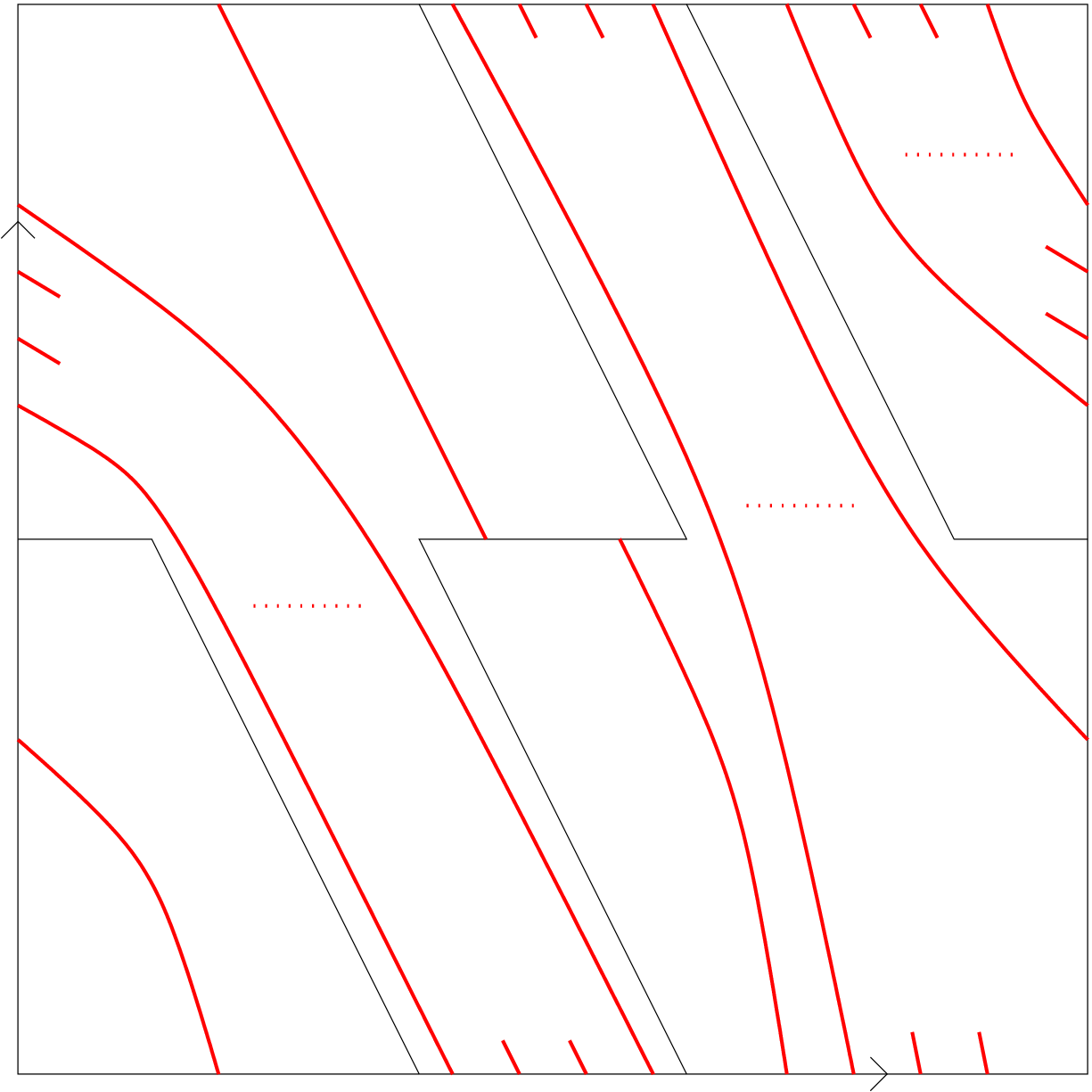}
  \caption{The second $1$-handle for $L(4n,2n+1)$, $n\geq 2$.}
  \label{figure:second-handle+}
\end{figure}

Now the boundary curve of the resulting $1$-handlebody intersects 
$\mu_2$ in $2+1+2(n-1)+1+2(n-1)=4n$ points, and $\lambda_2$ in
$2+1+2(n-1)=2n+1$ points, so it lies in the class
$4n\lambda_2-(2n+1)\mu_2$. Again, this is the
class of~$\mu_1$.

For $L(4n,2n-1)$, $n\in\N$, one can draw similar pictures,
or one appeals to the orientation-reversing
diffeomorphism from $L(4n,2n+1)$ to $L(4n,2n-1)$.
A $3$-dimensional visualisation of the $1$-handlebody in
$L(4,1)$ is shown in Figure~\ref{figure:L41}.

\begin{figure}[h]
\labellist
\small\hair 2pt
\endlabellist
\centering
\includegraphics[scale=0.35]{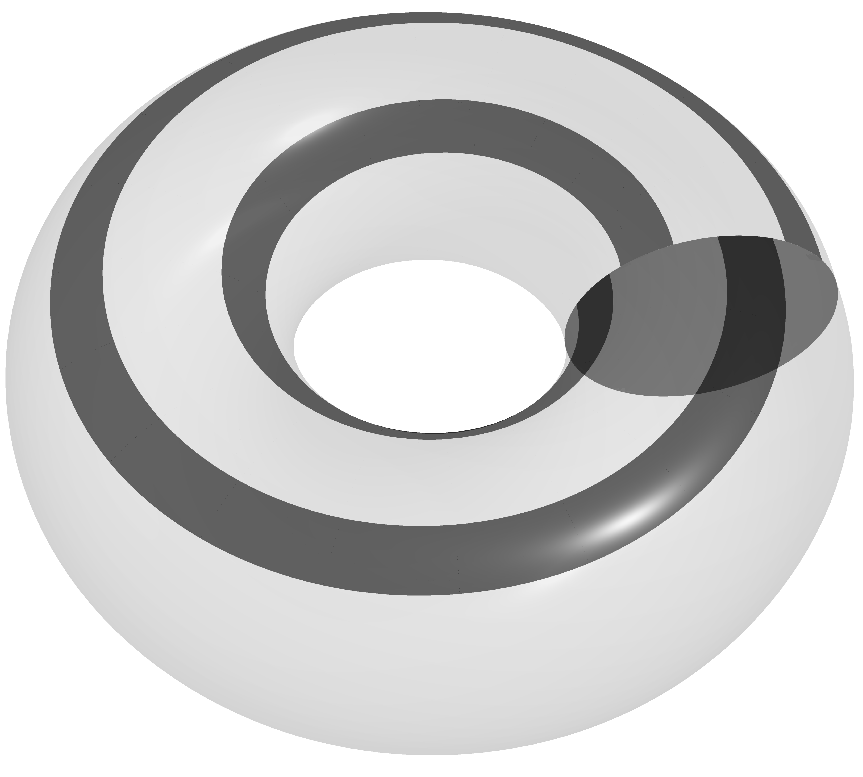}
  \caption{The $1$-handlebody of $K$ in $L(4,1)$.}
  \label{figure:L41}
\end{figure}

\subsection{Embedding into a lens model}
Before we specialise to $L(4n,2n\pm 1)$,
we want to describe a fundamental domain for the $\Z_p$-action
on the unit sphere $S^3\subset\C^2$ generated by $\sigma$
as in~\eqref{eqn:sigma}, producing the quotient $L(p,q)$.

In $S^3\subset\C^2$ we define the $2$-disc
\[ D:=\Bigl\{\bigl(\sqrt{1-r^2},r\rme^{\rmi\varphi}\bigr)\co
r\in[0,1],\, \varphi\in\R/2\pi\Z\Bigr\}.\]
The images of $D$ under the action of $\Z_p$ are discs that share
the boundary with~$D$, and which are
characterised by $\arg(z_1)$ being an integer
multiple of $2\pi/p$. As fundamental domain for the action we may take
the `lens'
\[ B:=\bigl\{(z_1,z_2)\in S^3\co \arg(z_1)\in[0,2\pi/p]\bigr\},\]
which is a homeomorphic copy of the $3$-ball with boundary
sphere $D\cup_{\partial D}\sigma(D)$. The lens space $L(p,q)$ is obtained
by identifying points $x\in D$ on the lower hemisphere
with $\sigma(x)$ on the upper hemisphere $\sigma(D)$. This
boundary identification amounts to a rotation of the lower hemisphere
through $2\pi q/p$ followed by vertical projection onto the upper hemisphere.

We now specialise to $p=4n$, $q=2n\pm 1$. As a model for the Klein bottle
we take
\[ K=\Bigl[0,\frac{\pi}{2n}\Bigr]\times[0,\pi]/\!\sim,\]
where $\sim$ denotes the boundary identification of the rectangle by
\[ (\varphi,0)\sim(\varphi,\pi)\;\;\;\text{and}\;\;\;
(0,\theta)\sim\Bigl(\frac{\pi}{2n},\pi-\theta\Bigr).\]
The embedding of the rectangle into $S^3$,
\[ \begin{array}{rccc}
\iota\co & \bigl[0,\frac{\pi}{2n}\bigr]\times[0,\pi] &
   \longrightarrow & S^3\\[1mm]
         & (\varphi,\theta)                          &
   \longmapsto     & \bigl(\sin\theta\,\rme^{\rmi\varphi},
                     \cos\theta\,\rme^{\pm\rmi\varphi}\bigr),
\end{array}\]
sends the rectangle to the fundamental domain~$B$, and its
boundary to~$\partial B$; see Figure~\ref{figure:lens-model},
which shows the situation for $L(8,3)$.

\begin{figure}[h]
\labellist
\small\hair 2pt
\endlabellist
\centering
\includegraphics[scale=0.6]{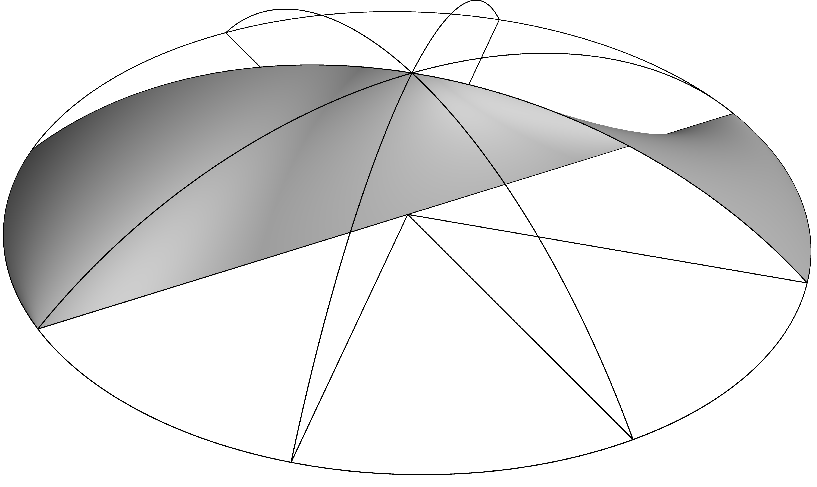}
  \caption{The embedding of $K$ in the lens model.}
  \label{figure:lens-model}
\end{figure}

This embedding descends to an embedding of $K$ into $L(4n,2n\pm 1)$, since
\[ \iota(\varphi,\pi)=\bigl(0,-\rme^{\pm\rmi\varphi}\bigr)=
\sigma^{2n}\bigl(0,\rme^{\pm\rmi\varphi}\bigr)=\sigma^{2n}\iota(\varphi,0)\]
and
\[ \iota\Bigl(\frac{\pi}{2n},\pi-\theta\Bigr)=
\bigl(\sin\theta\,\rme^{\pi\rmi/2n},-\cos\theta\,\rme^{\pm\pi\rmi/2n}\bigr)=
\sigma(\sin\theta,\cos\theta)=\sigma\iota(0,\theta).\]

\begin{rem}
In \cite{thie22} this embedding is related explicitly to the embedding
into the Seifert fibration and described in terms of the stereographic
projection of $S^3$ to~$\R^3$.
\end{rem}
\begin{ack}
This note is a companion paper to~\cite{gela21}. Both papers
(and the thesis project~\cite{thie22}) were prompted by
correspondence from Tye Lidman concerning an oversight in~\cite{gela18}.

H.G.\ is partially supported by the SFB/TRR 191
`Symplectic Structures in Geometry, Algebra and Dynamics',
funded by the DFG (Projektnummer 281071066 -- TRR 191).
\end{ack}


\begin{thebibliography}{10}
%
\bibitem{brwo69}
\textsc{G. E. Bredon and J. W. Wood},
Non-orientable surfaces in orientable $3$-manifolds,
\textit{Invent. Math.}
\textbf{7} (1969), 83--110.
%
\bibitem{end92}
\textsc{W. End},
Nonorientable surfaces in $3$-manifolds,
\textit{Arch. Math. (Basel)}
\textbf{59} (1992), 173--185.
%
\bibitem{gela18}
\textsc{H. Geiges and C. Lange},
Seifert fibrations of lens spaces,
\textit{Abh. Math. Semin. Univ. Hambg.}
\textbf{88} (2018), 1--22.
%
\bibitem{gela21}
\textsc{H. Geiges and C. Lange},
Correction to: Seifert fibrations of lens spaces,
\textit{Abh. Math. Semin. Univ. Hambg.}
\textbf{91} (2021), 145--150.
%
\bibitem{hatc02}
\textsc{A. Hatcher},
\textit{Algebraic Topology},
Cambridge University Press (2002).
%
\bibitem{hatc07}
\textsc{A. Hatcher},
\textit{Notes on Basic $3$-Manifold Topology}
(2007).
%
\bibitem{iwak09}
\textsc{M. Iwakura},
Non-orientable fundamental surfaces in lens spaces,
\texttt{arXiv:0903.4614v2}.
%
\bibitem{lrs15}
\textsc{A. S. Levine, D. Ruberman and S. Strle},
Nonorientable surfaces in homology cobordisms,
\textit{Geom. Topol.}
\textbf{19} (2015), 439--494.
%
\bibitem{rubi78}
\textsc{J. H. Rubinstein},
One-sided Heegaard splittings of $3$-manifolds,
\textit{Pacific J. Math.}
\textbf{76} (1978), 185--200.
%
\bibitem{rubi79}
\textsc{J. H. Rubinstein},
On $3$-manifolds that have finite fundamental group and
contain Klein bottles,
\textit{Trans. Amer. Math. Soc.}
\textbf{251} (1979), 129--137.
%
\bibitem{same69}
\textsc{H. Samelson},
Orientability of hypersurfaces in $\R^n$,
\textit{Proc. Amer. Math. Soc.}
\textbf{22} (1969), 301--302.
%
\bibitem{smal59}
\textsc{S. Smale},
Diffeomorphisms of the $2$-sphere,
\textit{Proc. Amer. Math. Soc.}
\textbf{10} (1959), 621--626.
%
\bibitem{thie22}
\textsc{N. Thies},
Kleinsche Flaschen in Linsenr\"aumen,
B.Sc. thesis, Universit\"at zu K\"oln (2022).
%
\bibitem{tsau92}
\textsc{C. M. Tsau},
A note on incompressible surfaces in solid tori and lens spaces,
in: \textit{Knots 90 (Osaka, 1990)},
de Gruyter, Berlin (1992), 213--229.
%
\end{thebibliography}
\end{document}